\documentclass[reqno]{amsart}
\usepackage{hyperref}
\usepackage[utf8]{inputenc}
\usepackage{amsmath}
\usepackage{amsfonts}
\usepackage{amssymb}
\usepackage{amsthm,color}
\usepackage{indentfirst}
\usepackage[left=2cm,right=2cm, top=2cm, bottom=2cm]{geometry}

\newcommand{\N}{\mathbb{N}}
\newcommand{\Nzero}{\mathbb{N}_0}
\begin{document}

\title[\hfilneg   ]{On the Gevrey regularity of the fifth-order Kadomtsev-Petviashvili-II equation: An improved approach}
	\date{}
	\author{Aissa Boukarou$^{*}$}
	
	\address{\textbf{Aissa Boukarou:}University of Science and Technology Houari Boumediene, Algiers, Algeria} 
	\email{boukarouaissa@gmail.com}	

        \author{Lamia Seghour}
	
	\address{\textbf{Lamia Seghour:} University of Science and Technology Houari Boumediene, Algiers, Algeria} 
	\email{seghour.lamia@gmail.com}

	\subjclass[2010]{35Q53, 35B65.}
	\keywords{Gevrey regularity, Carleman class,  Kadomtsev-Petviashvili equation, Bourgain space. }
	\begin{abstract}
	In this paper, we improve and extend the results obtained by Boukarou et al. \cite{boukarou1} on the Gevrey regularity of solutions to a fifth-order Kadomtsev-Petviashvili-II equation. We establish Gevrey regularity in the time variable for solutions in $2+1$ dimensions, providing a sharper result obtained through a new analytical approach. Assuming that the initial data are Gevrey regular of order $\sigma \geq 1$ in the spatial variables, we prove that the corresponding solution is Gevrey regular of order $5 \sigma$ in time. Moreover, we show that the function $u(x, y, t)$, viewed as a function of $t$, does not belong to $G^z$ for any $1 \leq z<5 \sigma$.  Our proof introduces a new analytical method that establishes a general principle for dispersive equations of the form $ \partial_t u = \pm\partial_x^\alpha u + P(u),$ where $\partial_x^\alpha $ is the highest spatial derivative and $P(u)$ a polynomial in spatial derivatives of total order at most $\alpha-1$, the solution cannot belong to the Gevrey class $G^z$ in time for any $z$ satisfying $1 \leq z<\alpha \sigma$.
	\end{abstract}
	\maketitle
	\numberwithin{equation}{section}
\newtheorem{theorem}{Theorem}[section]
\newtheorem{lemma}[theorem]{Lemma}
\newtheorem{proposition}[theorem]{Proposition}
\newtheorem{remark}[theorem]{Remark}
\newtheorem{definition}[theorem]{Definition}
\allowdisplaybreaks	

	\section{Introduction}
The Gevrey classes on a domain $\Omega \subset\mathbb{R}^n $, introduced by Maurice Gevrey \cite{Gevrey1918}, that provide a fine gradation between the space of analytic functions $C^{\omega}(\Omega)$ and the space of smooth functions $C^{\infty}(\Omega)$. These spaces have become an indispensable tool in the study of partial differential equations, asymptotic analysis of solutions to evolution equations arising in physics. A function \( f \in C^\infty(\Omega) \) is said to belong to the Gevrey class $ G^\sigma(\Omega) $, with $ \sigma \ge 1 $, if there exist constants $ C_1, C_2 > 0$ such that for all $ \alpha \in \mathbb{N}^n $,
\[
|\partial^\alpha f(x)| \leq C_1 C_2^{|\alpha|}\, (|\alpha|!)^{\sigma}, \qquad x \in \Omega.
\]
If $\sigma = 1$, this estimate characterizes analytic functions $G^1(\Omega) =C^{\omega}(\Omega)$, while for $\sigma > 1$, the function is smoother than $C^\infty$ but not analytic. The smaller the index $\sigma$, the stronger the regularity.

The Kadomtsev-Petviashvili (KP) equation is a fundamental nonlinear dispersive model that describes the propagation of weakly nonlinear, long surface or plasma waves with weak transverse effects. It was first derived by B.B. Kadomtsev and V.I. Petviashvili as a two dimensional generalization of the well-known Korteweg-de~Vries equation \cite{Kadomtsev}. The equation of KP is given by
\[
\partial_{x}(\partial_{t}u + 6u\partial_{x}u + \partial_{x}^{3}u) + \varrho \partial_{y}^{2}u = 0,
\]
where $\varrho = \pm 1$ selects the dispersion type, defining the KP-I for $\varrho = -1$ and KP-II for $\varrho = 1$ variants. Physically, KP-I describes phenomena with negative dispersion, including capillary-gravity waves in shallow water, certain plasma waves, and internal waves in stratified fluids. Conversely, the KP-II equation is characteristic of systems exhibiting positive dispersion.

Boukarou et al. \cite{Boukarou2021, boukarou1, boukarou2} studied the Cauchy problem for the generalized Kadomtsev--Petviashvili I, the fifth-order Kadomtsev--Petviashvili I and Kadomtsev--Petviashvili II equations in analytic Bourgain spaces. They established local well-posedness for analytic initial data and proved that analyticity in both spatial and temporal variables is preserved as long as the solution exists. Furthermore, they obtained quantitative lower bounds on the radius of spatial analyticity, showing that it decays at most algebraically in time. They also studied the regularity with respect to $t, x$, and $y$, demonstrating that the solution is analytic in $x$ and $y$ and belongs to $G^{5 \sigma}$ in $t$. However, an open question remains concerning the optimal regularity in time within the Gevrey class $G^z$ for any $1 \leq z<5 \sigma$. The objective of the present paper is to improve and extend the results obtained by Boukarou et al.~\cite{boukarou1} on the Gevrey regularity of solutions to a fifth-order Kadomtsev Petviashvili  equation \[	\partial_{t}u+\alpha\partial_{x}^{3}u+\partial_{x}^{5}u+\partial^{-1}_{x}\partial^{2}_{y}u+u\partial_{x}u=0,\] by introducing a new analytical method. We establish Gevrey regularity in the time variable for solutions in $2+1$ dimensions, providing a sharper result achieved through this refined approach. Assuming that the initial data are Gevrey regular of order $\sigma \geq 1$ in the spatial variables, we prove that the corresponding solution is Gevrey regular of order $5 \sigma$ in time. Moreover, we show that the function $u(x, y, t)$, viewed as a function of $t$, does not belong to $G^z$ for any $1 \leq z<5 \sigma$.

\bigskip
The rest of the paper is organized as follows. Section 2: presents the functional setting and preliminary lemmas related to Gevrey spaces, as well as the statement of our main results. Section 3: is devoted to the main estimates and the proof of the persistence of Gevrey regularity. Finally, Section 4: provides the concluding remarks.

\section{Functional spaces and main results}
We consider a Cauchy problem for fifth-order Kadomtsev-Petviashvili II equation
\begin{equation} 
	\left\{
	\begin{array}{ll}\label{p1}
		\partial_{t}u+\alpha\partial_{x}^{3}u+\partial_{x}^{5}u+\partial^{-1}_{x}\partial^{2}_{y}u+u\partial_{x}u=0,  \\ \\
		u(x,y,0)=\varphi(x,y), 
	\end{array}
	\right.
\end{equation}
where $u=u(x,y,t)$ and $x,y,t, \alpha \in \mathbb{R}^{2}$. 

We begin by defining the function spaces needed in our analysis, starting with the spaces of Gevrey functions 
$G^{\delta,\sigma,s_{1},s_{2}}(\mathbb{R}^{2})$ that contain the initial data.

\begin{definition}[Gevrey space]
	For $s_{1}, s_{2} \in \mathbb{R}$, $\sigma \geq 1$, and $\delta > 0$, we define
	\begin{equation}
		G^{\delta,\sigma,s_{1},s_{2}}(\mathbb{R}^{2})
		= \left\lbrace 
		\varphi \in L^{2}(\mathbb{R}^{2}) \;:\; 
		\Vert \varphi \Vert_{G^{\delta,\sigma,s_{1},s_{2}}(\mathbb{R}^{2})} < \infty 
		\right\rbrace,
	\end{equation}
	where
	\begin{equation*}
		\Vert \varphi \Vert^{2}_{G^{\delta,\sigma,s_{1},s_{2}}(\mathbb{R}^{2})}
		= \int_{\mathbb{R}^{2}} 
		e^{2\delta(|\xi|^{1/\sigma} + |\mu|^{1/\sigma})} 
		\langle \xi \rangle^{2s_{1}} \langle \mu \rangle^{2s_{2}}
		|\widehat{\varphi}(\xi,\mu)|^{2} \, d\xi \, d\mu.
	\end{equation*}
	Here, $\langle \cdot \rangle = (1 + |\cdot|^{2})^{1/2}$ and $\widehat{\varphi}$ is the space Fourier transform of $\varphi$ that is defined as
	$$
	\widehat{\varphi}(\xi,\mu)=\int_{\mathbb{R}^2} e^{-i(x \xi+y \mu)} \varphi(x, y) d x d y .
	$$
\end{definition}

\begin{remark}
	For $\delta = 0$, the space $G^{0,\sigma,s_{1},s_{2}}(\mathbb{R}^{2})$ coincides with the anisotropic Sobolev space 
	$H^{s_{1},s_{2}}(\mathbb{R}^{2})$, defined by
	\begin{equation}
		H^{s_{1},s_{2}}(\mathbb{R}^{2})
		= \left\lbrace 
		\varphi \in L^{2}(\mathbb{R}^{2}) \;:\; 
		\Vert \varphi \Vert_{H^{s_{1},s_{2}}(\mathbb{R}^{2})} < \infty 
		\right\rbrace,
	\end{equation}
	with
	\begin{equation*}
		\Vert \varphi \Vert^{2}_{H^{s_{1},s_{2}}(\mathbb{R}^{2})}
		= \int_{\mathbb{R}^{2}}  
		\langle \xi \rangle^{2s_{1}} \langle \mu \rangle^{2s_{2}} 
		|\widehat{\varphi}(\xi,\mu)|^{2} \, d\xi \, d\mu.
	\end{equation*}
\end{remark}

\begin{definition}[Gevrey Bourgain space]
	Let $s_{1}, s_{2}, b \in \mathbb{R}$, $\sigma \geq 1$, and $\delta > 0$. 
	The analytic Gevrey--Bourgain space associated with the fifth-order KP-II equation is defined as the completion of the Schwartz space 
	$\mathcal{S}(\mathbb{R}^{3})$ with respect to the norm
	\begin{equation} \label{e1.6}
		\Vert u \Vert_{X^{s_{1},s_{2}}_{\delta,\sigma,b}(\mathbb{R}^{3})}
		= \left( 
		\int_{\mathbb{R}^{3}} 
		e^{2\delta(|\xi|^{1/\sigma} + |\mu|^{1/\sigma})}
		\langle \xi \rangle^{2s_{1}} \langle \mu \rangle^{2s_{2}} 
		\langle \tau - \phi(\xi,\mu) \rangle^{2b} 
		|\widehat{u}(\xi,\mu,\tau)|^{2} 
		\, d\xi \, d\mu \, d\tau
		\right)^{1/2}.
	\end{equation}
	
	where	$\phi(\xi,\mu)=\xi^{5}-\alpha\xi^{3}+\dfrac{\mu^{2}}{\xi}$ and $\widehat{u}$ is the space time
	Fourier transform of $u$ that is defined as
	$$
	\widehat{u}(\xi,\mu,\tau)=\int_{\mathbb{R}^3} e^{-i(x \xi+y \mu+t\tau)} \varphi(x, y,t) d x d y dt .
	$$

	For $\delta = 0$, the space $X^{s_{1},s_{2}}_{0,\sigma,b}$ coincides with the standard Bourgain space $X_{s_{1},s_{2},b}$.
\end{definition}

\begin{definition}[Time restricted space]
	For $T > 0$, we define the restricted Bourgain--Gevrey space by
	\[
	X^{T,s_{1},s_{2}}_{\delta,\sigma,b}
	= \left\lbrace 
	u|_{[-T,T]} \;:\; u \in X^{s_{1},s_{2}}_{\delta,\sigma,b} 
	\right\rbrace,
	\]
	endowed with the norm
	\begin{equation*}
		\Vert u \Vert_{X^{T,s_{1},s_{2}}_{\delta,\sigma,b}}
		= \inf \left\lbrace 
		\Vert U \Vert_{X^{s_{1},s_{2}}_{\delta,\sigma,b}} 
		\;:\; 
		U \in X^{s_{1},s_{2}}_{\delta,\sigma,b}, \ 
		U|_{[-T,T]} = u 
		\right\rbrace.
	\end{equation*}
\end{definition}
We need to use the local well-posedness result established by Boukarou et al. in~\cite{Boukarou2021}. 
For $b \in \mathbb{R} $ with  $b\pm$ we donote  $b\pm \epsilon$ for a number  $\epsilon> 0$ small enough.
\begin{theorem}[\cite{boukarou1}] \label{the1.2}
	Let $s_{1},s_{2} \geq 0, \delta>0$, $\sigma\geq 1$ and $b = \frac{1}{2}+$.
	For initial data $\varphi$  in the space $G^{\delta,\sigma, s_{1},s_{2}} (\mathbb{R}^{2})$ and $\vert \xi\vert^{-1}\widehat{\varphi}(\xi,\mu)\in L^{2} $,there exists $T > 0$, which depends on $\varphi$, such that the Cauchy problem (\ref{p1})has a unique solution $u$ where, $$u\in X^{T,s_{1},s_{2}}_{\delta,\sigma, b}\subseteq C\left([-T, T],G^{\delta,\sigma, s_{1},s_{2}} \right).$$ Furthermore, the data-to-solution map is continuous.
	
\end{theorem}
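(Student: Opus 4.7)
The plan is to solve the Cauchy problem by a Picard iteration for the Duhamel integral equation on a closed ball of the Gevrey--Bourgain space $X^{T,s_{1},s_{2}}_{\delta,\sigma,b}$. Let $\psi\in C_{c}^{\infty}(\mathbb{R})$ be a standard time cutoff with $\psi\equiv 1$ on $[-1,1]$, and consider the truncated operator
\[
\Phi(u)(t) = \psi(t)\,S(t)\varphi - \tfrac{1}{2}\,\psi(t/T)\int_{0}^{t} S(t-t')\,\partial_{x}\bigl(u(t')^{2}\bigr)\,dt'.
\]
Closing the fixed point requires three ingredients: a homogeneous linear estimate, an inhomogeneous Duhamel estimate producing a small positive power of $T$, and a bilinear estimate for $\partial_{x}(uv)$ in the same scale of spaces.

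I would first transfer the standard Bourgain linear estimates to the Gevrey setting. Since the exponential weight $e^{2\delta(|\xi|^{1/\sigma}+|\mu|^{1/\sigma})}$ depends only on the spatial frequencies, it commutes with $S(t)$ and with the time multiplication $\psi(t/T)$; passing to the rescaled unknown $e^{\delta(|\xi|^{1/\sigma}+|\mu|^{1/\sigma})}\widehat{u}$ reduces both bounds to their classical counterparts. Concretely, for $b=\tfrac{1}{2}+$ and a suitable $b'=-\tfrac{1}{2}+$,
\[
\Vert \psi(t)\,S(t)\varphi\Vert_{X^{s_{1},s_{2}}_{\delta,\sigma,b}} \lesssim \Vert\varphi\Vert_{G^{\delta,\sigma,s_{1},s_{2}}}, \qquad \Bigl\Vert \psi(t/T)\!\int_{0}^{t}\! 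S(t-t')F\,dt'\Bigr\Vert_{X^{s_{1},s_{2}}_{\delta,\sigma,b}} \lesssim T^{1-b+b'}\Vert F\Vert_{X^{s_{1},s_{2}}_{\delta,\sigma,b'}}.
\]

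The critical step is the bilinear estimate
\[
\Vert \partial_{x}(uv)\Vert_{X^{s_{1},s_{2}}_{\delta,\sigma,b'}} \lesssim \Vert u\Vert_{X^{s_{1},s_{2}}_{\delta,\sigma,b}}\,\Vert v\Vert_{X^{s_{1},s_{2}}_{\delta,\sigma,b}}.
\]
On the convolution support $\xi=\xi_{1}+\xi_{2}$, $\mu=\mu_{1}+\mu_{2}$, the Gevrey weight peels off through the elementary subadditivity
\[
|\xi|^{1/\sigma}\leq |\xi_{1}|^{1/\sigma}+|\xi_{2}|^{1/\sigma}, \qquad |\mu|^{1/\sigma}\leq |\mu_{1}|^{1/\sigma}+|\mu_{2}|^{1/\sigma},
\]
valid for $\sigma\geq 1$. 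Everything then reduces to the classical anisotropic bilinear estimate for the fifth-order KP-I nonlinearity in $X_{s_{1},s_{2},b}$, handled by the standard resonance analysis based on the identity linking $\tau-\phi(\xi,\mu)$, $\tau_{1}-\phi(\xi_{1},\mu_{1})$, and $\tau_{2}-\phi(\xi_{2},\mu_{2})$, together with the dispersive gain from the quintic leading part of $\phi(\xi,\mu)=\xi^{5}-\alpha\xi^{3}+\mu^{2}/\xi$.

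The hard part will be the low-frequency behaviour of $\phi$ near $\xi=0$: the singular term $\mu^{2}/\xi$ prevents uniform control of the resonance function there, and it interacts awkwardly with the operator $\partial_{x}^{-1}\partial_{y}^{2}$ present in the equation. This is precisely why the auxiliary hypothesis $|\xi|^{-1}\widehat{\varphi}(\xi,\mu)\in L^{2}$ is required: combined with a frequency decomposition $|\xi|\lesssim 1$ versus $|\xi|\gtrsim 1$, it allows the low-frequency singularity to be absorbed into the data. Once the bilinear estimate is in hand, combining the three estimates yields $\Vert\Phi(u)\Vert\lesssim \Vert\varphi\Vert + T^{1-b+b'}\Vert u\Vert^{2}$ with a matching Lipschitz bound, so that for $T$ small enough depending on $\Vert\varphi\Vert_{G^{\delta,\sigma,s_{1},s_{2}}}$ the map $\Phi$ contracts on a ball of $X^{T,s_{1},s_{2}}_{\delta,\sigma,b}$. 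The embedding $X^{T,s_{1},s_{2}}_{\delta,\sigma,b}\hookrightarrow C([-T,T];G^{\delta,\sigma,s_{1},s_{2}})$ for $b>1/2$ and the continuity of the data-to-solution map then follow from routine arguments.
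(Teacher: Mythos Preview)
The paper does not prove this theorem: it is quoted verbatim from the reference \cite{boukarou1} and introduced by the sentence ``We need to use the local well-posedness result established by Boukarou et al.''; there is no accompanying proof in the present paper to compare against. Your sketch is the standard contraction argument in Gevrey--Bourgain spaces (transfer of the linear estimates via the purely spatial exponential weight, reduction of the bilinear estimate to the unweighted one through the subadditivity $|\xi|^{1/\sigma}\le|\xi_1|^{1/\sigma}+|\xi_2|^{1/\sigma}$ for $\sigma\ge1$, then Picard iteration with a small power of $T$), and this is precisely the method used in the cited paper, so your outline matches what the authors are invoking.
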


\begin{definition}[Gevrey Class $G^\sigma$]
	A smooth function $f(t)$ is said to belong to the Gevrey class $G^\sigma$,$\sigma \geq 1$ on an interval $I$ containing $0$ if there exist constants $C, R > 0$ such that for all $j \in \Nzero$ and all $t \in I$,
	\[
	|\partial_t^j f(t)| \leq C R^{j+1} (j!)^\sigma.
	\]
	A function $g(x,y)$ is in $G^\sigma(\mathbb{R}^2)$ if for all multi-indices $\alpha = (\alpha_1, \alpha_2)$, there exist constants $C, R > 0$ such that
	\[
	|\partial_x^{\alpha_1} \partial_y^{\alpha_2} g(x,y)| \leq C R^{|\alpha|+1} (|\alpha|!)^\sigma, \qquad \text{where}\quad |\alpha| = \alpha_1 + \alpha_2.
	\]
\end{definition}
\begin{remark}
	The Gevrey class $G^{\sigma}(\mathbb{R}^2)$ characterizes functions with derivatives growing like $(\alpha !)^{\sigma}$, while the Gevrey space $G^{\delta,\sigma,s_1,s_2}(\mathbb{R}^2)$ refines this notion by adding exponential Fourier weights that encode analyticity of radius $\delta$. In particular,
	$H^{s_1,s_2}(\mathbb{R}^2) \subset G^{\delta,\sigma,s_1,s_2}(\mathbb{R}^2) \subset G^{\sigma}(\mathbb{R}^2)$, and $G^{\delta,1,s_1,s_2}$ corresponds to analytic functions.
\end{remark}

We will demonstrate that if the initial data $\varphi$ is in $G^\sigma$, then the solution in time belongs to $G^{5\sigma}$ and does not belong to $G^z$, $1\leq z<5\sigma$. Our main result is the following theorem.

\begin{theorem} \label{thm:main}
	Let $\sigma \geq 1$. Suppose that the initial data $\varphi(x, y)$ belongs to the Gevrey space $G^{\delta,\sigma, s_{1},s_{2}} (\mathbb{R}^{2})$. Then there exists a time $T>0$ such that the solution $u(x, y, t)$ of the IVP \eqref{p1} given by Theorem \ref{the1.2} satisfies
	$$
	u(x, y, \cdot) \in G^{5 \sigma}([-T, T]) \quad \text{in time variable} .
	$$
	More precisely, there exist constants $C, R>0$ such that
	$$
	\left|\partial_t^j u(x, y, t)\right| \leq C R^{j+1}(j!)^{5 \sigma}, \quad \forall j \in \mathbb{N}_0, \quad \forall t \in[-T, T] .
	$$
	Furthermore, the function $u(x, y, t)$, viewed as a function of $t$, fails to belong to $G^{z}([-T, T])$ for any $1 \leq z < 5\sigma$,
	\[
	u(x,y,\cdot) \notin G^z([-T, T]) \quad \text{for any } 1 \leq z < 5\sigma.
	\]
	
\end{theorem}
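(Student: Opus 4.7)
My plan is to convert time derivatives into spatial ones via the PDE and then exploit the Gevrey-$\sigma$ regularity in $(x,y)$ that is already encoded in the weight $e^{\delta(|\xi|^{1/\sigma}+|\mu|^{1/\sigma})}$ of the Bourgain space $X^{T,s_{1},s_{2}}_{\delta,\sigma,b}$. Writing
$$\partial_t u = L u - \tfrac{1}{2}\partial_x(u^{2}),\qquad L := -\alpha\partial_x^{3} - \partial_x^{5} - \partial_x^{-1}\partial_y^{2},$$
and iterating yields the Leibniz-type recursion
$$\partial_t^{j+1} u = L\,\partial_t^{j} u - \tfrac{1}{2}\partial_x\!\sum_{k=0}^{j}\binom{j}{k}(\partial_t^{k}u)(\partial_t^{j-k}u).$$
First I would deduce from Theorem~\ref{the1.2} the pointwise spatial Gevrey bound
$$|\partial_x^{\alpha_1}\partial_y^{\alpha_2} u(x,y,t)|\leq C_0\,R_0^{|\alpha|}(|\alpha|!)^{\sigma},\qquad t\in[-T,T],$$
by Fourier inversion, Cauchy--Schwarz against $\langle\xi\rangle^{s_1}\langle\mu\rangle^{s_2}$, and the elementary estimate $\sup_{\xi\in\mathbb{R}}|\xi|^{n}e^{-\delta|\xi|^{1/\sigma}}\lesssim \delta^{-n\sigma}(n!)^{\sigma}$ obtained by maximizing in $\xi$ and applying Stirling's formula.

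\textbf{Majorant series and the exponent $5\sigma$.} Next I would run a scalar majorant argument on the recursion, built around the fact that one application of $L$ costs at most five spatial $x$-derivatives, two $y$-derivatives, and one inverse $x$-derivative (the last tamed by the $|\xi|^{-1}\widehat{\varphi}\in L^{2}$ hypothesis of Theorem~\ref{the1.2}). By induction $\partial_t^{j} u$ is a finite linear combination of products of spatial derivatives of $u$ whose total $x$-order is at most $5j$; by the spatial Gevrey bound each such term is controlled by $R^{5j}((5j)!)^{\sigma}$, and the Stirling identity $((5j)!)^{\sigma}\asymp 5^{5\sigma j}(j!)^{5\sigma}$ converts this into $C R^{j+1}(j!)^{5\sigma}$ after enlarging $R$. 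The nonlinear Cauchy product is handled via the numerical inequality
$$\sum_{k=0}^{j}\binom{j}{k}(k!)^{5\sigma}((j-k)!)^{5\sigma} = j!\sum_{k=0}^{j}\bigl(k!(j-k)!\bigr)^{5\sigma-1} \leq C_{5\sigma}\,(j!)^{5\sigma},$$
which holds because $\sum_{k=0}^{j}\binom{j}{k}^{-(5\sigma-1)}$ is uniformly bounded for $5\sigma-1\geq 1$, i.e., for every $\sigma\geq 1$. The main obstacle I anticipate is calibrating a single majorant constant $R$ so that the linear dispersion $L$ and the Cauchy product are both absorbed into one geometric series; shrinking $T$ makes the nonlinear contribution small, while enlarging $R$ absorbs the powers of $5$ and the derivative constants coming from $L$.

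\textbf{Sharpness at $(0,0)$.} For the optimality claim the plan is to exhibit an explicit datum whose linear evolution already saturates the exponent $5\sigma$ and to show that the Duhamel nonlinear correction cannot cancel it. Take
$$\widehat{\varphi}(\xi,\mu) = e^{-\delta(|\xi|^{1/\sigma}+|\mu|^{1/\sigma})}\,\chi(\xi)\,\psi(\mu),$$
with $\chi$ smooth, nonnegative, supported in $\{\xi\geq 1\}$ and $\psi$ smooth, nonnegative, supported in $\{|\mu|\leq 1\}$, so that $\varphi\in G^{\delta,\sigma,s_1,s_2}(\mathbb{R}^{2})$ and $|\xi|^{-1}\widehat{\varphi}\in L^{2}$. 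Differentiating the Duhamel formula $j$ times in $t$ and evaluating at $(0,0,0)$ gives
$$\partial_t^{j} u(0,0,0) = \int_{\mathbb{R}^{2}}(i\phi(\xi,\mu))^{j}\widehat{\varphi}(\xi,\mu)\,d\xi\,d\mu + \mathcal{R}_j,$$
where $\mathcal{R}_j$ is a nonlinear remainder controlled by the upper-bound argument applied to $u^{2}$. On the support of $\widehat{\varphi}$ one has $\phi(\xi,\mu)=\xi^{5}-\alpha\xi^{3}+\mu^{2}/\xi \geq \tfrac{1}{2}\xi^{5}$ for $\xi$ large, so the real part of the main integral dominates $c\int_{\xi_0}^{\infty}\xi^{5j}e^{-\delta\xi^{1/\sigma}}\,d\xi$. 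The substitution $\eta=\delta\xi^{1/\sigma}$ converts this into $c'\,\delta^{-5\sigma j-\sigma}\Gamma(5\sigma j+\sigma)$, and Stirling's formula then yields $|\partial_t^{j}u(0,0,0)|\geq c_0 A^{j}(j!)^{5\sigma}$ for some $A,c_0>0$, a growth incompatible with any $G^{z}$ bound with $z<5\sigma$. The most delicate step will be ensuring that the oscillatory contribution of $\mu^{2}/\xi$ does not cancel the monotone growth from $\xi^{5}$: localising $\psi$ near $\mu=0$ keeps $\phi$ essentially equal to $\xi^{5}-\alpha\xi^{3}$ on the support of $\widehat\varphi$, and shrinking $T$ renders $\mathcal{R}_j$ a small multiple of the main term.
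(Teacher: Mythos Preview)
Your recursion is correct, but the induction you propose does not close. If the induction hypothesis is only $|\partial_t^{k}u|\leq CR^{k}(k!)^{5\sigma}$, then at step $j+1$ you need $L\,\partial_t^{j}u$ and $\partial_x\bigl((\partial_t^{k}u)(\partial_t^{j-k}u)\bigr)$, both of which require \emph{spatial} derivatives of $\partial_t^{k}u$ that your hypothesis does not provide. Your alternative sentence (``$\partial_t^{j}u$ is a finite combination of products of spatial derivatives of $u$ of total order $\leq 5j$'') is true, but then the whole burden shifts to the combinatorics of the coefficients and the number of monomials, which you do not estimate; bounding each monomial by $R^{5j}((5j)!)^{\sigma}$ and ignoring the sum is the classical way such arguments fail. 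The paper closes precisely this gap by inducting on the stronger mixed estimate $|\partial_t^{j}\partial_x^{\ell}\partial_y^{m}u|\leq M^{j+1}M_{\ell+m+5j}$ with the explicit majorant $M_q=\varepsilon^{1-q}c(q!)^{\sigma}/(q+1)^{2}$; the Leibniz triple sum is then reduced, via a generating-function inequality $(1+x)^{\ell+m}(1+x^{5})^{j}\leq(1+x)^{\ell+m+5j}$, to the single-index convolution $\sum_s\binom{k}{s}M_sM_{k-s}\leq(M_0+\varepsilon)M_k$, and the extra $\varepsilon$-powers coming from (P2) absorb the lower-order linear pieces $\alpha\partial_x^{3}$, $\partial_x^{-1}\partial_y^{2}$ as well as the nonlinearity. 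Your numerical inequality $\sum_k\binom{j}{k}(k!)^{5\sigma}((j-k)!)^{5\sigma}\leq C(j!)^{5\sigma}$ is the $5\sigma$-analogue of (P1) but applied to the wrong index: the convolution that actually appears is in the spatial index $\ell+m+5j$, not in $j$.

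\textbf{Sharpness.} Here your route genuinely diverges from the paper's. The paper constructs $\varphi$ via the D\v{z}ana\v{s}ija solution of Carleman's problem so that $\partial_x^{n}\varphi(0,0)=(n!)^{\sigma}$ and all $y$-derivatives vanish at the origin; it then decomposes $\partial_t^{j}u|_{t=0}=\partial_x^{5j}\varphi+L_j(\varphi)+N_j(\varphi)$ and argues algebraically that the leading term $((5j)!)^{\sigma}$ dominates the lower-order linear and nonlinear pieces. Your Fourier-side datum and the Gamma-function lower bound for the linear part are plausible, but the step ``shrinking $T$ renders $\mathcal{R}_j$ a small multiple of the main term'' is wrong as written: you evaluate at $t=0$, where the Duhamel remainder is a differential polynomial in $\varphi$ alone and is completely insensitive to $T$. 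To salvage the argument you would need to show that the exponential base in the lower bound (roughly $((5\sigma)/\delta)^{5\sigma}$) strictly exceeds the base $R$ produced by your upper bound for the nonlinear part, or to introduce an amplitude parameter and exploit that $\mathcal{R}_j$ is at least quadratic in $\varphi$; neither is addressed. Note also that your $\widehat\varphi$ is supported in $\{\xi\geq 1\}$, hence $\varphi$ is not real-valued, which is at odds with the real KP setting.
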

\begin{remark}
	The number $5$ in $G^{5\sigma}$ is sharp and originates from the highest order spatial derivative, $\partial_x^5 u$, in the equation.	
\end{remark}

\section{Gevrey regularity}
\noindent \textbf{Spatial Gevrey regularity:} We begin by stating the spatial regularity result, which can be proven using techniques analogous to Section $3$ of \cite{boukarou1}.

\begin{proposition}[Spatial Gevrey Regularity] \label{prop:spatial}
	Let $\sigma \geq 1$ and $\varphi \in G^{\delta,\sigma, s_{1},s_{2}} (\mathbb{R}^{2})$. Then there exists a time $T > 0$ and a constant $C_1 > 0$ such that the solution $u$ of \eqref{p1} satisfies
	\begin{equation}
		|\partial_x^\ell \partial_y^m u(x, y, t)| \leq C_1^{\ell+m+1} ((\ell+m)!)^\sigma \quad \text{for all } \ell, m \in \Nzero, \quad \forall (x,y,t) \in \mathbb{R}^2 \times [-T, T]. \label{spatial}
	\end{equation}
	
\end{proposition}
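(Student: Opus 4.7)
The plan is to exploit the local well-posedness result in Theorem \ref{the1.2}, which provides a solution $u \in C([-T,T]; G^{\delta,\sigma,s_{1},s_{2}}(\mathbb{R}^{2}))$ and hence a uniform bound $M := \sup_{t \in [-T,T]} \Vert u(t)\Vert_{G^{\delta,\sigma,s_{1},s_{2}}} < \infty$. The pointwise estimate on spatial derivatives should then follow by a direct Fourier-inversion argument in which the analytic--Gevrey weight $e^{\delta(|\xi|^{1/\sigma}+|\mu|^{1/\sigma})}$ absorbs the monomial $|\xi|^{\ell}|\mu|^{m}$ generated by differentiation, converting the $L^{2}$ Fourier decay provided by $\varphi \in G^{\delta,\sigma,s_{1},s_{2}}$ into a pointwise factorial bound of Gevrey type $\sigma$ in $(x,y)$.

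Concretely, I would start from the Fourier-inversion estimate
\begin{equation*}
|\partial_{x}^{\ell}\partial_{y}^{m} u(x,y,t)| \leq \int_{\mathbb{R}^{2}} |\xi|^{\ell}|\mu|^{m}|\widehat{u}(\xi,\mu,t)|\, d\xi\, d\mu,
\end{equation*}
multiply and divide the integrand by $e^{\delta(|\xi|^{1/\sigma}+|\mu|^{1/\sigma})}\langle\xi\rangle^{s_{1}}\langle\mu\rangle^{s_{2}}$, and apply Cauchy--Schwarz to separate the solution norm from an $(\ell,m)$-dependent factor:
\begin{equation*}
|\partial_{x}^{\ell}\partial_{y}^{m} u(x,y,t)| \leq M\, I(\ell,m)^{1/2}, \qquad I(\ell,m) = \int_{\mathbb{R}^{2}} \frac{|\xi|^{2\ell}|\mu|^{2m}}{\langle\xi\rangle^{2s_{1}}\langle\mu\rangle^{2s_{2}}}\, e^{-2\delta(|\xi|^{1/\sigma}+|\mu|^{1/\sigma})}\, d\xi\, d\mu.
\end{equation*}
The key point is that $I(\ell,m)$ factors into a product of two one-dimensional integrals; each factor reduces, via the substitution $s = r^{1/\sigma}$, to a scaled Gamma function of the form $\sigma\, \Gamma(\sigma(2\ell+1))/(2\delta)^{\sigma(2\ell+1)}$, times a harmless constant. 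Stirling's formula then yields $\Gamma(\sigma(2\ell+1)) \leq C^{2\ell+1}((2\ell)!)^{\sigma}$, and combining with $(2\ell)! \leq 4^{\ell}(\ell!)^{2}$ together with $\ell!\, m! \leq (\ell+m)!$ produces the claimed bound $|\partial_{x}^{\ell}\partial_{y}^{m} u(x,y,t)| \leq C_{1}^{\ell+m+1}((\ell+m)!)^{\sigma}$.

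The main technical obstacle is the careful bookkeeping of constants through Stirling's asymptotics so that the final constant $C_{1}$ is independent of $\ell$ and $m$; in particular one must absorb the factor $(2\delta)^{-\sigma(2\ell+1)}$ into $C_{1}^{\ell+m+1}$ and ensure that the passage from $((2\ell)!(2m)!)^{\sigma/2}$ to $((\ell+m)!)^{\sigma}$ does not introduce $\ell$- or $m$-dependent losses. A secondary issue is the behaviour of $I(\ell,m)$ near the Fourier origin when $s_{1}, s_{2} \geq 0$: since $\langle\xi\rangle^{-2s_{1}} \leq 1$ and $|\xi|^{2\ell}$ is locally integrable in one dimension for any $\ell \geq 0$, the integrand is integrable at $0$, while at infinity the weight $e^{-2\delta r^{1/\sigma}}$ dominates the polynomial growth. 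Once these constants are calibrated, the bound holds uniformly in $(x,y,t) \in \mathbb{R}^{2} \times [-T,T]$, which is exactly the statement of the proposition.
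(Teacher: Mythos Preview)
Your argument is correct and is exactly the standard mechanism for converting the exponential Fourier weight in the $G^{\delta,\sigma,s_{1},s_{2}}$ norm into pointwise Gevrey bounds: Fourier inversion, Cauchy--Schwarz against the weight, reduction to one-dimensional Gamma integrals via $s = r^{1/\sigma}$, and Stirling. The paper does not actually write out a proof of this proposition but simply refers to the techniques of Section~3 of \cite{boukarou1}, which carry out precisely this computation; your proposal is therefore in line with the intended argument, with the constant bookkeeping you outline being the only thing to execute carefully.
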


\subsection{Failure of $G^z$-Regularity in Time for $ 1\leq z < 5\sigma$}
We begin by constructing initial data with precisely controlled derivatives at the origin.

\begin{definition}[Carleman Class]
	Let $\{m_{n_1,n_2}\}$ be a sequence of positive numbers. We denote by $C(m_{n_1,n_2})$ the class of all functions $f(x,y)$, infinitely differentiable on $[-1, 1]^2$, for which there exists $A > 0$ such that
	\[
	|\partial_x^{n_1} \partial_y^{n_2} f(x,y)| \leq A^{n_1+n_2+1} m_{n_1,n_2}, \quad \text{for all } (x,y) \in [-1, 1]^2 \text{ and } n_1, n_2 \in \N.
	\]
\end{definition}

\begin{theorem}[\cite{Dzanasija1962}]\label{Dzanasija}
	For every $\sigma \geq 1$ and every sequence of complex numbers $\{v_{n_1,n_2}\}$ satisfying
	\[
	|v_{n_1,n_2}| \leq B^{n_1+n_2+1} (n_1+n_2)^{(n_1+n_2)\sigma} \quad \text{for some } B > 0,
	\]
	there exists a function $f(x,y) \in C(n^{n\sigma})$ such that
	\[
	\partial_x^{n_1} \partial_y^{n_2} f(0,0) = v_{n_1,n_2}, \quad \text{for all } n_1, n_2 \in \N.
	\]
\end{theorem}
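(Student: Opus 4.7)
The plan is to adapt the classical Borel--Ritt--Carleman construction to the two--dimensional Gevrey setting. One starts from the (generally divergent) formal series
\[
F(x,y) = \sum_{n_1, n_2 \ge 0} \frac{v_{n_1,n_2}}{n_1! \, n_2!}\, x^{n_1} y^{n_2},
\]
and regularizes each monomial by a rescaled Gevrey bump, with scales chosen large enough to force convergence of all derivatives while preserving the prescribed Taylor coefficients at the origin.

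Concretely, fix a cutoff $\chi \in C_c^\infty(\mathbb{R})$ with $\chi \equiv 1$ on $[-\tfrac12,\tfrac12]$, $\operatorname{supp}\chi \subset [-1,1]$, and belonging to the Gevrey class $G^\sigma$. Such cutoffs exist precisely because $\sigma > 1$ (this is the only place where the strict inequality enters essentially), and they satisfy $|\chi^{(k)}(t)| \le M_0^{k+1} (k!)^\sigma$ for all $k \in \Nzero$. For a sequence $\{\lambda_{n_1,n_2}\}$ to be fixed later, I would set
\[
f(x,y) = \sum_{n_1, n_2 \ge 0} \frac{v_{n_1,n_2}}{n_1! \, n_2!}\, x^{n_1} y^{n_2}\, \chi(\lambda_{n_1,n_2} x)\, \chi(\lambda_{n_1,n_2} y).
\]
Since $\chi(\lambda \cdot) \equiv 1$ near $0$, a Leibniz expansion shows that the $(n_1,n_2)$--summand contributes exactly $v_{n_1,n_2}$ to $\partial_x^{n_1} \partial_y^{n_2} f(0,0)$, while every other summand vanishes there, either because a factor $x^{n_1-m_1}$ or $y^{n_2-m_2}$ is zero at the origin, or because the cutoff itself is flat.

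The combinatorial heart of the proof is to choose the $\lambda_{n_1,n_2}$ so that the Gevrey bound
\[
|\partial_x^{m_1} \partial_y^{m_2} f(x,y)| \le A^{m_1+m_2+1} (m_1+m_2)^{(m_1+m_2)\sigma}
\]
holds uniformly on $[-1,1]^2$. By Leibniz and the Gevrey control of $\chi$, each summand produces terms of the form
\[
\frac{|v_{n_1,n_2}|}{n_1!\, n_2!} \binom{m_1}{k_1}\binom{m_2}{k_2} \frac{n_1! \, n_2!}{(n_1-m_1+k_1)!\,(n_2-m_2+k_2)!}\, \lambda_{n_1,n_2}^{k_1+k_2}\, M_0^{k_1+k_2+2} (k_1!\, k_2!)^\sigma.
\]
Using the hypothesis $|v_{n_1,n_2}| \le B^{n_1+n_2+1} (n_1+n_2)^{(n_1+n_2)\sigma}$ together with Stirling's formula, the factorials in the numerator cancel $n_1! n_2!$ up to a controlled exponential loss, leaving an explicit growth in $\lambda_{n_1,n_2}$. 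I would then take $\lambda_{n_1,n_2} = \Lambda^{n_1+n_2}$ with $\Lambda > 2B$ sufficiently large, so that the high--$(n_1,n_2)$ tail of the series converges geometrically while the low--index part remains within the target Gevrey envelope.

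The hard part will be to balance, for each fixed $(m_1,m_2)$, the two natural regimes: the low--index piece $n_1+n_2 \le m_1+m_2$, which must be controlled using only the Gevrey growth of $\chi^{(k)}$ and the monomial bounds; and the high--index piece $n_1+n_2 > m_1+m_2$, where one exploits the shrinking support $|x| \le \lambda_{n_1,n_2}^{-1}$ together with the negative powers $\lambda_{n_1,n_2}^{-(n_1+n_2-m_1-m_2)}$ left over after cancellation. A single constant $A$, depending only on $B$, $M_0$, and $\Lambda$, must dominate both regimes simultaneously; this is the delicate calculation. Once that uniform bound is secured, uniform convergence of each derivative series on $[-1,1]^2$ justifies termwise differentiation, the matching of derivatives at the origin is immediate, and $f \in C(n^{n\sigma})$ follows. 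Finally I would record the reason $\sigma = 1$ is genuinely excluded: no compactly supported analytic cutoff exists, and at that borderline the growth hypothesis on $\{v_{n_1,n_2}\}$ alone does not determine an analytic interpolant.
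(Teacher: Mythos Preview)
The paper does not actually prove this theorem; it is quoted from D\v{z}ana\v{s}ija (1962) and used as a black box, so there is no in--paper argument to compare against. Your Borel--Ritt--Carleman scheme is the standard and correct route: regularize each monomial by a rescaled $G^\sigma$ cutoff (whose existence is exactly where $\sigma>1$ enters), verify the Taylor coefficients at the origin term by term, and split the Leibniz expansion of $\partial_x^{m_1}\partial_y^{m_2}f$ into low-- and high--index regimes.

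There is, however, a concrete gap in your choice of scales. Tracking powers of $\lambda=\lambda_{n_1,n_2}$ in a single Leibniz term, the factor $|x|^{n_1-m_1+k_1}|y|^{n_2-m_2+k_2}$ on the support $\{|x|,|y|\le\lambda^{-1}\}$ contributes $\lambda^{-(n-m)-(k_1+k_2)}$ while the cutoff derivatives contribute $\lambda^{k_1+k_2}$, so the net exponent of $\lambda$ is exactly $m-n$ with $m=m_1+m_2$, $n=n_1+n_2$, independently of $k_1,k_2$. On the low--index range $n\le m$ this exponent is nonnegative, and with your exponential choice $\lambda=\Lambda^{n}$ the term at $n\approx m/2$ carries a factor $\Lambda^{n(m-n)}\approx\Lambda^{m^2/4}$, which grows faster than any admissible envelope $A^{m}m^{m\sigma}$; the claim that ``the low--index part remains within the target Gevrey envelope'' therefore fails as written. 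The fix is to let $\lambda_{n_1,n_2}$ grow only polynomially, for instance $\lambda_{n_1,n_2}=C\,(n_1+n_2)^{\sigma}$ (any exponent strictly greater than $\sigma-1$ and at most $\sigma$ works): then on the low side $\lambda^{m-n}\le (Cm^{\sigma})^{m}$ fits inside the Gevrey budget, while on the high side $\lambda^{-(n-m)}=C^{-(n-m)}n^{-\sigma(n-m)}$ still beats the residual growth $n^{n(\sigma-1)}$ coming from $|v_{n_1,n_2}|/((n_1{-}m_1{+}k_1)!(n_2{-}m_2{+}k_2)!)$ and yields a geometrically convergent tail. With this corrected scaling the rest of your outline goes through.
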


\begin{proposition}
	For any $\sigma \geq 1$, there exists a real valued function $\varphi(x,y) \in G^\sigma(\mathbb{R}^2)$ such that
	\begin{enumerate}
		\item $\partial_x^{n_1} \varphi(0,0) = (n_1!)^\sigma$, for all $n_1 \in \Nzero$.
		\item $\partial_x^{n_1} \partial_y^{n_2} \varphi(0,0) = 0$, whenever $n_2 > 0$.
		\item $|\partial_x^{n_1} \partial_y^{n_2} \varphi(x,y)| \leq C^{n_1+n_2+1} ((n_1+n_2)!)^\sigma$ for all $(x,y) \in \mathbb{R}^2$.
	\end{enumerate}
\end{proposition}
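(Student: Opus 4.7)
The plan is to apply Dzanasija's theorem (Theorem \ref{Dzanasija}) to a sequence encoding the desired derivatives at the origin, and then to globalize the resulting function by multiplication with a Gevrey cutoff.

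I would first define the target sequence $v_{n_1,n_2} := (n_1!)^\sigma$ when $n_2 = 0$, and $v_{n_1,n_2} := 0$ otherwise, and verify the hypothesis of Theorem \ref{Dzanasija}. For $n_2 > 0$ this is trivial. For $n_2 = 0$, the elementary bound $n_1! \leq n_1^{n_1}$ (with the convention $0^0 = 1$) yields $(n_1!)^\sigma \leq n_1^{n_1\sigma}$, which is of the required form with any $B \geq 1$. Theorem \ref{Dzanasija} then produces a function $f \in C(n^{n\sigma})$ on $[-1,1]^2$ realizing the prescribed jet at the origin; since the values $v_{n_1,n_2}$ are real, replacing $f$ by $\operatorname{Re}(f)$ preserves every bound and yields a real-valued function.

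Next I would convert the Carleman bound into a Gevrey bound. With $n = n_1+n_2$, Stirling's inequality $n^n \leq e^n\, n!$ gives
\[
|\partial_x^{n_1}\partial_y^{n_2} f(x,y)| \leq A^{n+1} n^{n\sigma} \leq A^{n+1} e^{n\sigma}(n!)^\sigma \leq \widetilde{C}^{\,n+1}(n!)^\sigma,\qquad (x,y)\in [-1,1]^2.
\]
To globalize, I would multiply by a cutoff $\chi \in G^\sigma(\mathbb{R}^2)$ with $\chi \equiv 1$ on a neighborhood of the origin and $\operatorname{supp}(\chi) \subset (-1,1)^2$; such cutoffs exist precisely because $\sigma > 1$. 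Setting $\varphi := \chi f$, the Leibniz formula gives $\partial_x^{n_1}\partial_y^{n_2}\varphi(0,0) = \partial_x^{n_1}\partial_y^{n_2} f(0,0)$, since every derivative of $\chi$ at the origin vanishes except $\chi(0,0)=1$; this establishes (1) and (2) at once. For (3), the closure of $G^\sigma(\mathbb{R}^2)$ under pointwise multiplication---a standard consequence of Leibniz's rule together with the combinatorial inequality $\binom{n}{k}(k!)^\sigma((n-k)!)^\sigma \leq (n!)^\sigma$ valid for $\sigma \geq 1$---transfers the Gevrey bound for $f$ on $[-1,1]^2$ into a global Gevrey bound for $\varphi$.

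The main obstacle I anticipate is ensuring that the localization step does not disturb the prescribed jet at the origin or destroy the Gevrey estimate; both issues are resolved by choosing $\chi$ to be flat at $0$, and the existence of such a $\chi$ rests precisely on the standing hypothesis $\sigma > 1$. Beyond this point every ingredient is routine: Dzanasija's theorem is invoked verbatim, the Stirling comparison is elementary, and the multiplicative closure of $G^\sigma$ is classical.
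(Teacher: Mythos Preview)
Your proposal is correct and follows essentially the same approach as the paper: apply Dzanasija's theorem to the sequence $v_{n_1,n_2}$, convert the Carleman bound $n^{n\sigma}$ into a Gevrey bound via $n^n \le e^n n!$, and then localize with a compactly supported $G^\sigma$ cutoff equal to $1$ near the origin, invoking the algebra property of $G^\sigma$. Your treatment is in fact slightly more careful than the paper's in two places: you explicitly take the real part to guarantee real-valuedness, and you spell out via Leibniz why the cutoff does not disturb the jet at the origin.
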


\begin{proof}
	Apply the Theorem \ref{Dzanasija} to the sequence
	\[
	v_{n_1,n_2} = 
	\begin{cases}
		(n_1!)^\sigma & \text{if } n_2 = 0 \\
		0 & \text{if } n_2 > 0.
	\end{cases}
	\]
	We verify the condition
	\begin{itemize}
		\item If $n_2 = 0$: $|v_{n_1,0}| = (n_1!)^\sigma \leq n_1^{n_1\sigma} \leq (n_1+n_2)^{(n_1+n_2)\sigma}$
		\item If $n_2 > 0$: $|v_{n_1,n_2}| = 0 \leq (n_1+n_2)^{(n_1+n_2)\sigma}$
	\end{itemize}
	
\end{proof}
By the Theorem \ref{Dzanasija}, there exists $f(x,y) \in C(n^{n\sigma})$ with the prescribed derivatives. Since,  there exists a constant $B>0$ such that for all $x,y \in[-1,1]$ we have

$$|\partial_x^{n_1} \partial_y^{n_2} f(x,y)| \leq B^{n_1+n_2+1}(n_1+n_2)^{(n_1+n_2)\sigma} \leq B^{n_1+n_2+1}((n_1+n_2)!)^\sigma e^{(n_1+n_2)\sigma}\leq C^{n_1+n_2+1}((n_1+n_2)!)^\sigma,$$
where $C=\max \left\{B, B  e^{\sigma}\right\}$, so $f \in G^\sigma((-1,1)^2)$.  Next we modify $f(x,y)$ so that it has compact support in $(-1,1)^{2}$.  For this we choose a cut-off function  $\chi(x,y) \in G_c^\sigma((-1,1)^2)$ with $\chi(x,y) \equiv 1$ near $(0,0)$. If $\varphi(x,y)$ is  extention of $\chi f$ then by the algebra property for Gevrey functions we have $\varphi \in G^\sigma(\mathbb{R}^{2})$. We also have the relation inherited by $f(x,y)$,

$$
\partial_x^{n_1} \varphi(0,0)=\partial_x^{n_1} f(0,0)=(n_1!)^\sigma.
$$

Replacing $t$ with $-t$ we can write our initial value problem \eqref{p1} as follows
\[
\partial_t u = F(u) = \alpha \partial_x^3 u + \partial_x^5 u + \partial_x^{-1} \partial_y^2 u + u \partial_x u
\]

\begin{lemma}\label{structure}
	For any $j \geq 1$, the time derivative $\partial_t^j u$ at $t = 0$ can be expressed as
	\[
	\partial_t^j \varphi = \partial_x^{5j} \varphi + L_j(\varphi) + N_j(\varphi),
	\]
	where
	\begin{itemize}
		\item $L_j(\varphi)$ contains lower-order linear terms from $\alpha \partial_x^3 \varphi$ and $\partial_x^{-1} \partial_y^2 \varphi$.
		\item $N_j(\varphi)$ contains nonlinear terms from $\varphi \partial_x \varphi$.
	\end{itemize}
	More precisely, for any spatial derivatives $\partial_x^\ell \partial_y^m$
	\[
	\partial_t^j \partial_x^\ell \partial_y^m \varphi = \partial_x^{\ell+5j} \partial_y^m \varphi + L_{j,\ell,m}(\varphi) + N_{j,\ell,m}(\varphi).
	\]
\end{lemma}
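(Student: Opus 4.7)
The plan is to prove the lemma by induction on $j$, in the same spirit as Lemma~\ref{lem:time_deriv_structure}, but refining the bookkeeping so as to separate the purely $\varphi$-linear contributions (which will be collected into $L_j$) from the genuinely nonlinear ones coming from the quadratic term $u\partial_x u$ (which will be collected into $N_j$). For the base case $j=1$, I substitute $u|_{t=0}=\varphi$ into the evolution equation and read off
\[
\partial_t u|_{t=0}=\partial_x^5\varphi+\alpha\partial_x^3\varphi+\partial_x^{-1}\partial_y^2\varphi+\varphi\partial_x\varphi,
\]
so $L_1(\varphi)=\alpha\partial_x^3\varphi+\partial_x^{-1}\partial_y^2\varphi$ and $N_1(\varphi)=\varphi\partial_x\varphi$. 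The refined $(\ell,m)$-version is obtained by first applying $\partial_x^\ell\partial_y^m$ to the equation, which commutes with $\partial_t$, and then evaluating at $t=0$.

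For the inductive step, I write $\partial_t^{j+1}u=\partial_t^j F(u)$ and split into the four contributions coming from $\partial_x^5u$, $\alpha\partial_x^3u$, $\partial_x^{-1}\partial_y^2u$, and $u\partial_x u$. The three linear pieces are handled by applying the induction hypothesis in its general $(\ell,m)$ form with $(\ell,m)\in\{(5,0),(3,0),(-1,2)\}$; the $\partial_x^5$ piece produces the new leading term $\partial_x^{5(j+1)}\varphi$, while the two other linear pieces and all inherited $L_{j,\ell,m}$ residues combine to form $L_{j+1}(\varphi)$, and all inherited $N_{j,\ell,m}$ residues are absorbed into $N_{j+1}(\varphi)$.

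The nonlinear piece is treated using the general Leibniz rule,
\[
\partial_t^j(u\partial_x u)\big|_{t=0}=\sum_{k=0}^j\binom{j}{k}\bigl(\partial_t^k u\big|_{t=0}\bigr)\bigl(\partial_x\partial_t^{j-k}u\big|_{t=0}\bigr).
\]
By the induction hypothesis each of the two factors is a polynomial of degree at least one in $\varphi$ and its spatial derivatives, so every monomial appearing after expansion has total degree at least two in $\varphi$, and therefore belongs to $N_{j+1}(\varphi)$. The spatial-order constraint on the resulting products, namely $|\beta^{(1)}|+|\beta^{(2)}|\le \ell+5(j+1)-1$, is inherited directly from Lemma~\ref{lem:time_deriv_structure}, since the sum of the two factor-orders increases by at most $5$ when $j$ is incremented (four from one factor plus one from the extra $\partial_x$, or five from the other factor).

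The hard part, which I expect to need the cleanest statement, is ruling out any hidden linear-in-$\varphi$ leakage from $\partial_t^j(u\partial_x u)$ during the recursion: one must observe that already at $t=0$, by induction, both $\partial_t^k u|_{t=0}$ and $\partial_x\partial_t^{j-k}u|_{t=0}$ are sums of monomials of degree $\ge 1$ in $\varphi$, so every product that arises in the Leibniz expansion has degree $\ge 2$, which is precisely the defining property of $N_{j+1}$. Once this is in place, the refined identity for $\partial_t^j\partial_x^\ell\partial_y^m u|_{t=0}$ follows by commuting $\partial_x^\ell\partial_y^m$ through $\partial_t^j$ and repeating the same bookkeeping, yielding the announced decomposition with the dominant contribution $\partial_x^{\ell+5j}\partial_y^m\varphi$ plus the linear remainder $L_{j,\ell,m}(\varphi)$ and the quadratic nonlinear remainder $N_{j,\ell,m}(\varphi)$.
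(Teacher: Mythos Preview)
Your proposal is correct and follows essentially the same inductive strategy as the paper: both argue by induction on $j$, handle the base case by direct substitution of $u|_{t=0}=\varphi$ into the equation, and in the inductive step split $\partial_t^{j+1}u$ into the four contributions from $\partial_x^5u$, $\alpha\partial_x^3u$, $\partial_x^{-1}\partial_y^2u$, and $u\partial_x u$. The only organisational difference is that the paper writes $\partial_t^{j+1}u=\partial_t(\partial_t^ju)$ and differentiates the inductive decomposition once more, whereas you write $\partial_t^{j+1}u=\partial_t^j F(u)$ and invoke the $(\ell,m)$-refined hypothesis on each linear piece; this is a harmless reordering, and your explicit remark that the Leibniz expansion of $\partial_t^j(u\partial_xu)$ produces only monomials of degree $\ge 2$ in $\varphi$ (hence no linear leakage into $L_{j+1}$) is a useful clarification that the paper leaves implicit.
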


\begin{proof}
	We proceed by induction on $j$.
	
	Base case ($j=1$):
	\[
	\partial_t \varphi = \alpha \partial_x^3 \varphi + \partial_x^5 \varphi + \partial_x^{-1} \partial_y^2 \varphi + \varphi \partial_x \varphi,
	\]
	so
	\begin{align*}
		\text{Leading term} &= \partial_x^5 \varphi, \\
		L_1(\varphi) &= \alpha \partial_x^3 \varphi + \partial_x^{-1} \partial_y^2 \varphi, \\
		N_1(\varphi) &= \varphi \partial_x \varphi.
	\end{align*}
	
	Inductive step: Assume the result holds for $j$, then
	\[
	\partial_t^{j+1} \varphi = \partial_t(\partial_t^j u)|_{t=0} = \partial_t(\partial_x^{5j} \varphi + L_j(\varphi) + N_j(\varphi)).
	\]
	
	Compute each term
	\begin{itemize}
		\item 
		\begin{align*}
			\partial_t(\partial_x^{5j} \varphi) &= \partial_x^{5j}(\partial_t u) \\&= \partial_x^{5j}(\alpha \partial_x^3 \varphi + \partial_x^5 \varphi + \partial_x^{-1} \partial_y^2 \varphi + \varphi \partial_x \varphi)
			\\ &= \alpha \partial_x^{5j+3} \varphi + \partial_x^{5(j+1)} \varphi + \partial_x^{5j-1} \partial_y^2 \varphi + \partial_x^{5j}(\varphi \partial_x \varphi)
		\end{align*}
		
		\item $\partial_t L_j(\varphi)$: Since $L_j$ contains terms that are linear in spatial derivatives of $\varphi$, applying $\partial_t$ and using the base case gives terms of order at most $5j + 4$
		
		\item $\partial_t N_j(\varphi)$: Since $N_j$ contains products of spatial derivatives, applying $\partial_t$ and the product rule gives terms where the total order increases by at most $4$.
	\end{itemize}
Thus we maintain the structure with leading term $\partial_x^{5(j+1)} \varphi$.
\end{proof}
We now analyze the growth of each type of term at $(0,0,0)$.

\begin{lemma}
	For the leading term
	\[
	|\partial_x^{5j} \varphi(0,0)| = ((5j)!)^\sigma\geq C^j (j!)^{5\sigma},
	\]
	where $C$ is a positive constant.
\end{lemma}

\begin{proof}
	From our construction, $\partial_x^{5j} \varphi(0,0) = ((5j)!)^\sigma$. Using the inequality $(5j)! \geq (j!)^5$ , we get
	\[
	((5j)!)^\sigma \geq (j!)^{5\sigma}.
	\]
	More precisely, by Stirling's formula
	\begin{align*}
		(5j)! &\sim (5j)^{5j} e^{-5j} \sqrt{2\pi 5j} \\
		(j!)^5 &\sim j^{5j} e^{-5j} (2\pi j)^{5/2}.
	\end{align*}
	So
	\[
	\frac{(5j)!}{(j!)^5} \sim \frac{5^{5j} j^{5j}}{j^{5j}} \cdot \frac{\sqrt{10\pi j}}{(2\pi)^{5/2} j^{5/2}} = 5^{5j} \cdot \frac{\sqrt{10\pi j}}{(2\pi)^{5/2} j^{5/2}}.
	\]
	Thus
	\[
	((5j)!)^\sigma \geq C^j (j!)^{5\sigma},
	\]
	for some constant $C > 0$.
\end{proof}
In what follows, we analyze the lower-order linear terms.

\begin{lemma}
	The terms from $L_j(\varphi)$ satisfy
	\begin{itemize}
		\item From $\alpha \partial_x^{5j+3} \varphi$: $|\alpha \partial_x^{5j+3} \varphi(0,0)| = |\alpha| ((5j+3)!)^\sigma$
		\item From $\partial_x^{5j-1} \partial_y^2 \varphi$: $|\partial_x^{5j-1} \partial_y^2 \varphi(0,0)| = 0$ (since $\varphi$ is independent of $y$)
		\item Other linear terms have order at most $5j + 3$
	\end{itemize}
	Moreover
	\[
	((5j+3)!)^\sigma \leq K^j ((5j)!)^\sigma
	\]
	for some constant $K > 0$.
\end{lemma}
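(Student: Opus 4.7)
The first two bullets follow immediately from the explicit construction of $\varphi$ in the preceding proposition. Applying property~(1) with $n_1 = 5j+3$ gives $\partial_x^{5j+3}\varphi(0,0) = ((5j+3)!)^\sigma$, and multiplying by $|\alpha|$ yields the first identity. Applying property~(2) with $n_2 = 2 > 0$ yields the second bullet, since any mixed derivative involving a positive $y$-order vanishes at $(0,0)$ regardless of the $x$-exponent.

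For the third bullet, I would trace the inductive construction of $L_j(\varphi)$ from the previous lemma. At each iteration of $\partial_t$, only three linear operators arise from the equation: $\partial_x^5$ (already accounted for separately in the extracted leading term), $\alpha\partial_x^3$ (adding $3$ to the total spatial order), and $\partial_x^{-1}\partial_y^2$ (introducing at least one $\partial_y$ factor). Since any monomial containing a $\partial_y$ factor vanishes at $(0,0)$ by property~(2) of $\varphi$, the linear terms that survive evaluation at the origin are built from iterated compositions of $\partial_x^5$ and $\alpha\partial_x^3$ alone. Among these, the leading $\partial_x^{5j}\varphi$ piece has already been split off, so the maximum surviving spatial order comes from replacing exactly one $\partial_x^5$ factor by $\alpha\partial_x^3$, producing order $5j+3$ with the indexing of the statement. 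A short induction on $j$ confirms that this order is both achieved and not exceeded, by tracking the spatial order produced at each application of $\partial_t$ to a term in $L_{j}(\varphi)$.

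Finally, the factorial ratio is an elementary computation:
\[
\frac{(5j+3)!}{(5j)!} = (5j+1)(5j+2)(5j+3) \leq (5j+3)^{3},
\]
hence $\bigl((5j+3)!\bigr)^\sigma \leq (5j+3)^{3\sigma}\,\bigl((5j)!\bigr)^\sigma$. Since $(5j+3)^{3\sigma}$ grows only polynomially in $j$, there exists $K > 0$ with $(5j+3)^{3\sigma} \leq K^{j}$ for every $j \geq 1$ (small $j$ being absorbed by enlarging the constant), which completes the stated bound.

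The main obstacle lies in the third bullet: one must combine the iterative structure of the preceding lemma with the $\partial_y$-vanishing property of $\varphi$ at the origin to discard the $\partial_x^{-1}\partial_y^2$ contributions and verify that no surviving linear monomial exceeds the claimed spatial order of $5j+3$. The first two bullets and the factorial estimate are routine, but this combinatorial bookkeeping is the structural step that isolates the $\alpha\partial_x^3$ term as the effective sub-leading linear contribution.
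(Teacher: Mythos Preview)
Your proposal is correct and follows the same approach as the paper; in fact, the paper's own proof addresses only the factorial-ratio estimate (with the identical computation $((5j+1)(5j+2)(5j+3))^\sigma \le (5j+3)^{3\sigma}$, polynomial in $j$, hence $\le K^j$) and the vanishing of the $y$-derivative term, so your more detailed treatment of the first and third bullets actually goes beyond what the paper writes out. Your phrasing that mixed $y$-derivatives vanish \emph{at the origin} is even more precise than the paper's parenthetical ``since $\varphi$ is independent of $y$,'' since the Carleman-type construction only forces vanishing of $y$-derivatives at $(0,0)$, not global $y$-independence.
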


\begin{proof}
	For the ratio
	\[
	\frac{((5j+3)!)^\sigma}{((5j)!)^\sigma} = ((5j+1)(5j+2)(5j+3))^\sigma \leq (5j+3)^{3\sigma},
	\]
	which grows polynomially in $j$, so $((5j+3)!)^\sigma \leq K^j ((5j)!)^\sigma$ for some $K > 0$.
	
	The $y$-derivative terms vanish because our constructed $\varphi$ is independent of $y$.
\end{proof}
We now analyze the nonlinear terms.
\begin{lemma}
	The nonlinear terms $N_j(\varphi)$ satisfy
	\[
	|N_j(\varphi)(0,0)| \leq D^j ((5j+1)!)^\sigma,
	\]
	and
	\[
	((5j+1)!)^\sigma \leq E^j \cdot j^{-2\sigma} ((5j)!)^\sigma,
	\]
	for some constant $D,E > 0$.
\end{lemma}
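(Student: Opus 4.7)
The plan is to treat the two inequalities separately. The second one,
\[
((5j+1)!)^{\sigma} \leq E^{j} j^{-2\sigma} ((5j)!)^{\sigma},
\]
is essentially algebraic: factor out $(5j+1)^{\sigma}$ from the ratio to reduce the claim to $(5j+1)^{\sigma} j^{2\sigma} \leq E^{j}$, an exponential-versus-polynomial estimate which holds for any $E > 1$ once $j$ is sufficiently large and, by enlarging $E$, for every $j \geq 1$.

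For the first inequality $|N_{j}(\varphi)(0,0)| \leq D^{j} ((5j+1)!)^{\sigma}$, I would proceed in three stages. First, refining the induction in Lemma \ref{lem:time_deriv_structure} and retaining only those contributions that involve at least one action of $u \partial_{x} u$, one checks that
\[
N_{j}(\varphi) = \sum_{T} c_{T} \prod_{i=1}^{n_{T}} \partial_{x}^{p_{i}^{T}} \partial_{y}^{q_{i}^{T}} \varphi,
\]
with $n_{T} \geq 2$ and with the total order constraint $\sum_{i}(p_{i}^{T} + q_{i}^{T}) \leq 5j - 1$, inherited from the base case $j=1$ (where $u\partial_{x} u$ has total spatial order $1$) because each additional $\partial_{t}$ raises the total spatial order by at most $5$. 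Second, since our constructed $\varphi$ is independent of $y$, at the origin only monomials with every $q_{i}^{T} = 0$ survive, and each surviving monomial evaluates to $c_{T} \prod_{i} (p_{i}^{T}!)^{\sigma}$ with $\sum_{i} p_{i}^{T} \leq 5j - 1$. The multinomial inequality $\prod_{i} p_{i}^{T}! \leq \bigl(\sum_{i} p_{i}^{T}\bigr)!$, together with $\sigma \geq 1$, then bounds each surviving monomial by $|c_{T}| ((5j-1)!)^{\sigma} \leq |c_{T}| ((5j+1)!)^{\sigma}$.

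Third, and this is the crux, I would control the total coefficient mass $M_{j} := \sum_{T} |c_{T}|$ by showing $M_{j} \leq D^{j}$ for some constant $D > 0$, by induction on $j$. The base step is immediate since $N_{1} = \varphi \partial_{x} \varphi$ gives $M_{1} = 1$. For the induction step one analyses how $\partial_{t}$, realised as substitution of $F(u) = \alpha \partial_{x}^{3} u + \partial_{x}^{5} u + \partial_{x}^{-1}\partial_{y}^{2} u + u \partial_{x} u$ and distributed by the Leibniz rule, converts the decomposition $\partial_{x}^{5j} u + L_{j} + N_{j}$ into the constituents of $N_{j+1}$; the four-term structure of $F$ and the bilinearity of its only nonlinear piece should yield $M_{j+1} \leq D \cdot M_{j}$ with $D$ independent of $j$. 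I expect this accounting to be the main obstacle: a naive Leibniz expansion introduces combinatorial factors that depend both on the orders $p_{i}^{T}$ (which can be as large as $5j-1$) and on the number of factors $n_{T}$ (which may grow with $j$), and the induction closes only once these factors are absorbed into the per-monomial estimate by the same multinomial inequality used above. Once $M_{j} \leq D^{j}$ is established, combining with the per-monomial bound yields $|N_{j}(\varphi)(0,0)| \leq D^{j} ((5j+1)!)^{\sigma}$ as claimed.
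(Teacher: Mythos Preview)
Your approach mirrors the paper's: bound each nonlinear monomial by $((5j+1)!)^{\sigma}$ via the multinomial/factorial inequality, claim the total coefficient mass is at most $D^{j}$, and handle the second inequality by the polynomial-versus-exponential observation $(5j+1)^{\sigma}j^{2\sigma}\le E^{j}$.

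A few points of comparison are worth recording. First, the paper asserts that every term in $N_{j}(\varphi)$ is a \emph{bilinear} product $(\partial_{x}^{\beta_{1}}\varphi)(\partial_{x}^{\beta_{2}}\varphi)$; as you correctly note, repeated substitution of $u\partial_{x}u$ into $\partial_{t}u=F(u)$ produces products of arbitrarily many factors, so your formulation with $n_{T}\ge 2$ is the accurate one (the two-factor description suffices only because the multinomial inequality $\prod_{i}p_{i}!\le(\sum_{i}p_{i})!$ works equally well for any number of factors). Second, the paper derives the order bound $\sum\beta_{i}\le 5j+1$ via the rewrite $u\partial_{x}u=\tfrac12\partial_{x}(u^{2})$, whereas you invoke the sharper bound $\le 5j-1$ already contained in Lemma~\ref{lem:time_deriv_structure}; either suffices. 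Third, and most importantly, you correctly identify the step $\sum_{T}|c_{T}|\le D^{j}$ as the crux and as genuinely incomplete: the Leibniz expansion introduces binomial coefficients of size up to $2^{5j}$, and these must be absorbed by the slack in the multinomial inequality rather than counted as part of $M_{j}$. The paper does not prove this step either---it simply asserts that ``the number of such terms grows at most exponentially in $j$''---so your proposal is at least as rigorous as the paper on this point, and more honest about where the difficulty lies.
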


\begin{proof}
	When we differentiate the nonlinear term $\varphi\partial_x \varphi$ with respect to time, we need to understand how the maximum spatial order increases. At $j=1$ (first time derivative)
	\[
	\partial_t(\varphi\partial_x \varphi) = (\partial_t \varphi)(\partial_x \varphi) + \varphi(\partial_t \partial_x \varphi).
	\]
	The spatial orders are
	\begin{itemize}
		\item $\partial_t \varphi$ has maximum order $5$ (from $\partial_x^5 \varphi$)
		\item $\partial_x \varphi$ has order $1$.
		\item So $(\partial_t \varphi)(\partial_x \varphi)$ has total order $5 + 1 = 6 = 5\cdot 1 + 1$.
	\end{itemize}
	
	More systematically, each time derivative applied to a product increases the maximum possible order by 5 (from the linear term $\partial_x^5 \varphi$), but due to the product structure, we get an additional $+1$ from the $\partial_x$ in $\varphi\partial_x \varphi$. Let's prove by induction that after $j$ time derivatives, nonlinear terms have maximum order $\leq 5j + 1$.
	
	\textbf{Base case ($j=1$):} As shown above, maximum order is 6 = $5\cdot 1 + 1$.
	
	\textbf{Inductive step:} Assume for some $j \geq 1$, all terms in $N_j(\varphi)$ have spatial order $\leq 5j + 1$. Consider $\varphi\partial_x \varphi = \frac{1}{2}\partial_x(\varphi^2)$, so
	\[
	\partial_t^j(\varphi\partial_x \varphi) = \frac{1}{2}\partial_x \partial_t^j(\varphi^2).
	\]
	Thus, $\partial_t^j(\varphi^2)$ is a sum of products of spatial derivatives of $\varphi$ with total order $\leq 5j$. Then applying $\partial_x$ gives total order $\leq 5j + 1$.\\
	Each nonlinear term is of the form
	\[
	C \cdot (\partial_x^{\beta_1} \varphi) \cdot (\partial_x^{\beta_2} \varphi), \qquad \text{with} \quad \beta_1 +\beta_2 \leq 5j + 1.
	\]
	Using our construction where $\partial_x^n \varphi(0,0) = (n!)^\sigma$, we have
	\[
	|(\partial_x^{\beta_1} \varphi(0,0)) \cdot (\partial_x^{\beta_2} \varphi(0,0))| = (\beta_1!)^\sigma \cdot (\beta_2!)^\sigma
	\]
	
	Using the inequality $\beta_1! \cdot \beta_2! \leq (\beta_1 + \beta_2)!$, we get
	\[
	|(\partial_x^{\beta_1} \varphi) \cdot (\partial_x^{\beta_2} \varphi)(0,0)| \leq ((5j+1)!)^\sigma.
	\]
	The number of such terms grows at most exponentially in $j$, so
	\[
	|N_j(\varphi)(0,0)| \leq D^j ((5j+1)!)^\sigma
	\]
	for some constant $D > 0$. Now we compare with the leading term $((5j)!)^\sigma$,
	we have
	\[
	((5j+1)!)^\sigma = (5j+1)^\sigma ((5j)!)^\sigma.
	\]
	We need to show this is $\leq E^j \cdot j^{-2\sigma} ((5j)!)^\sigma$ for $E > 0$, i.e.
	\[
	(5j+1)^\sigma \leq E^j \cdot j^{-2\sigma},
	\]
	taking $\sigma$-th roots
	\[
	5j+1 \leq E^{j/\sigma} \cdot j^{-2}.
	\]
	For large $j$, $E^{j/\sigma}$ grows exponentially while $5j+1$ grows linearly, so this inequality holds for any $E > 0$ and sufficiently large $j$.
	
	More precisely, choose $E$ such that $E^{1/\sigma} > 1$, then for large $j$
	\[
	E^{j/\sigma} \cdot j^{-2} \geq E^{j/\sigma} \cdot j^{-2} \gg 5j+1.
	\]
	Therefore
	\[
	((5j+1)!)^\sigma \leq E^j \cdot j^{-2\sigma} ((5j)!)^\sigma,\qquad \text{with}\quad E > 0.
	\]
	
\end{proof}

\subsubsection*{Proof of Theorem \ref{thm:main} (Failure of $G^z$-Regularity):}

Take $\varphi$ as constructed above, we have
\[
\partial_t^j u(0,0,0) = \underbrace{\partial_x^{5j} \varphi(0,0)}_{\text{Leading term}} + \underbrace{L_j(\varphi)(0,0)}_{\text{Linear lower}} + \underbrace{N_j(\varphi)(0,0)}_{\text{Nonlinear}}.
\]
From our estimates
\begin{align*}
	|\partial_x^{5j} \varphi(0,0)| &= ((5j)!)^\sigma \\
	|L_j(\varphi)(0,0)| &\leq K^j ((5j)!)^\sigma \\
	|N_j(\varphi)(0,0)| &\leq D^j E^j j^{-2\sigma} ((5j)!)^\sigma.
\end{align*}
Therefore, for large $j$
\begin{align*}
	|\partial_t^j u(0,0,0)| &\geq ((5j)!)^\sigma - K^j ((5j)!)^\sigma - D^j E^j j^{-2\sigma} ((5j)!)^\sigma \\
	&= ((5j)!)^\sigma \left(1 - K^j - D^j E^j j^{-2\sigma}\right)
\end{align*}

For sufficiently large $j$, we have $K^j + D^j E^j j^{-2\sigma} \leq \frac{1}{2}$, so
\[
|\partial_t^j u(0,0,0)| \geq \frac{1}{2} ((5j)!)^\sigma
\]

Using $(5j)! \geq (j!)^5$, we get
\[
|\partial_t^j u(0,0,0)| \geq \frac{1}{2} (j!)^{5\sigma}.
\]
Therefore, $u(0,0,\cdot) \notin G^z$ for any $1\leq z < 5\sigma$.

\subsection{$G^{5\sigma}$ regularity in $t$}

We establish the temporal Gevrey regularity by employing the method of majorant series \cite{Alinhac,Hannah2011}.\\
Let $c > 0$ be a constant chosen such that the following fundamental inequality holds
\begin{equation}
	\sum_{0 \leq \ell \leq k} \binom{k}{\ell} m_\ell m_{k-\ell} \leq m_k, \quad \text{where } m_q = \frac{c (q!)^\sigma}{(q+1)^2}. \label{eq:majorant_base}
\end{equation}

Now, for a small parameter $\varepsilon > 0$ to be chosen later, define the sequence $\{M_q\}$ by
\begin{align}
	M_0 &= \frac{c}{8}, \label{M0} \\
	M_q &= \varepsilon^{1 - q} m_q = \varepsilon^{1 - q} \frac{c (q!)^\sigma}{(q+1)^2}, \quad \text{for } q = 1, 2, 3, \dots \label{Mq}
\end{align}

This sequence possesses the following crucial properties

\begin{lemma}[Properties of $\{M_q\}$] \label{lem:M_props}
	The sequence $\{M_q\}$ defined by \eqref{M0} and \eqref{Mq} satisfies:
	\begin{enumerate}
		\item[(P1)]  For any $k \geq 1$,
		\[
		\sum_{0 < \ell < k} \binom{k}{\ell} M_\ell M_{k-\ell} \leq \varepsilon M_k.
		\]
		\item[(P2)]  For any $j \geq 2$,
		\[
		M_j \leq \varepsilon M_{j+1}.
		\]
		\item[(P3)]  Given the constant $C_1 > 0$ from Proposition \ref{prop:spatial}, there exists $\varepsilon_1 > 0$ such that for all $0 < \varepsilon \leq \varepsilon_1$ and for all $j \geq 2$,
		\[
		C_1^{j+1} (j!)^\sigma \leq M_j.
		\]
	\end{enumerate}
\end{lemma}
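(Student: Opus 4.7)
The plan is to verify each of the three properties by direct algebraic manipulation, using only the definition $M_q = \varepsilon^{1-q} m_q$ for $q \geq 1$ together with the base convolution inequality \eqref{eq:majorant_base} on the auxiliary sequence $m_q$; the irregular value $M_0 = c/8$ will not enter any of the three computations. For (P1) with $k \geq 2$, the constraint $0 < \ell < k$ forces both $\ell$ and $k - \ell$ to lie in $\{1, \dots, k-1\}$, so $M_\ell M_{k-\ell} = \varepsilon^{2-k} m_\ell m_{k-\ell}$. Factoring $\varepsilon^{2-k}$ out of the sum and enlarging the range to the full convolution by adding the two non-negative endpoint terms, \eqref{eq:majorant_base} would give
\[
\sum_{0 < \ell < k} \binom{k}{\ell} M_\ell M_{k-\ell} \leq \varepsilon^{2-k} \sum_{0 \leq \ell \leq k} \binom{k}{\ell} m_\ell m_{k-\ell} \leq \varepsilon^{2-k} m_k = \varepsilon M_k,
\]
and the case $k = 1$ is vacuous because the sum is empty.

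For (P2), with $j \geq 2$ both $M_j$ and $M_{j+1}$ are given by the factorial formula, and forming the ratio reduces the claim to
\[
\frac{\varepsilon M_{j+1}}{M_j} = \frac{(j+1)^{\sigma}(j+1)^2}{(j+2)^2} \geq 1.
\]
Since $\sigma \geq 1$, it suffices to verify $(j+2)^2 \leq (j+1)^3$, which expands to $j^3 + 2j^2 - j - 3 \geq 0$, a polynomial inequality that holds for all $j \geq 2$. The case $j = 1$ fails, which both explains the lower bound in the statement of (P2) and justifies the separate definition of $M_0$.

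For (P3), after cancelling $(j!)^\sigma$ the required bound becomes
\[
C_1^2 (j+1)^2 (C_1 \varepsilon)^{j-1} \leq c, \qquad j \geq 2.
\]
The key observation is that once $C_1 \varepsilon < 1$ the geometric factor $(C_1\varepsilon)^{j-1}$ dominates the polynomial factor $(j+1)^2$, so $\sup_{j \geq 2}(j+1)^2 (C_1\varepsilon)^{j-1}$ is finite, depends continuously on $\varepsilon$, and tends to $0$ as $\varepsilon \to 0^+$. Choosing $\varepsilon_1$ small enough that this supremum is at most $c/C_1^2$ then gives (P3). An elementary ratio test in $j$ shows that for $\varepsilon < 9/(16 C_1)$ the supremum is attained at $j = 2$ and equals $9 C_1 \varepsilon$, leading to the explicit choice $\varepsilon_1 = \min\{9/(16C_1),\, c/(9C_1^3)\}$. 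I do not anticipate any serious obstacle; the only mildly delicate point is the monotonicity calculation in (P2), where the restriction $j \geq 2$ emerges sharply.
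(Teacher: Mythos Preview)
Your proof is correct; the paper states this lemma without proof, and your argument supplies the routine verification that the paper omits. The only inaccuracy is the side remark that the failure of (P2) at $j=1$ ``justifies the separate definition of $M_0$'': that failure concerns $M_1$ versus $M_2$ and has nothing to do with $M_0$, whose special value $c/8$ is instead tailored to the base case of Proposition~\ref{prop:main_estimate} and to the endpoint term $M_kM_0$ in the nonlinear estimate.
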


Let us also define the constant $M$ which will absorb various constants in the estimates
\begin{equation}
	M = \max\left\{ 2,\ \frac{8 C_1}{c},\ \frac{4 C_1^2}{c} \right\}. \label{M_constant}
\end{equation}
The core of the proof is the following proposition, which controls all derivatives of the solution.

\begin{proposition} \label{prop:main_estimate}
	There exists $\varepsilon_0 > 0$ such that for any $0 < \varepsilon \leq \varepsilon_0$, the solution $u$ of \eqref{p1} satisfies
	\begin{equation}
		|\partial_t^j \partial_x^\ell \partial_y^m u(x, y, t)| \leq M^{j+1} M_{\ell + m + 5j} \label{eq:main_estimate}
	\end{equation}
	for all $j, \ell, m \in \Nzero$ and for all $(x, y, t) \in \mathbb{R}^2 \times [-T, T]$.
\end{proposition}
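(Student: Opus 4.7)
The plan is to prove \eqref{eq:main_estimate} by induction on $j$, with the spatial indices $(\ell, m) \in \Nzero^2$ quantified universally at each step. At step $j$ I assume
\[
|\partial_t^{j'}\partial_x^\ell\partial_y^m u(x,y,t)| \leq M^{j'+1}M_{\ell+m+5j'} \qquad \forall\, 0 \leq j' \leq j,\ \forall\, \ell, m \in \Nzero,
\]
and deduce the same for $j+1$. The base case $j=0$ is an immediate consequence of the spatial estimate \eqref{spatial} of Proposition \ref{prop:spatial} combined with property (P3) of Lemma \ref{lem:M_props}; the constant $M$ in \eqref{M_constant} is chosen large enough to absorb the exceptional small cases $\ell+m \in \{0,1\}$ where (P3) does not apply directly.

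For the inductive step, the equation itself lets me trade one time derivative for spatial operators,
\[
\partial_t^{j+1}\partial_x^\ell\partial_y^m u = -\,\partial_t^j \partial_x^\ell\partial_y^m\bigl(\alpha\partial_x^3 u + \partial_x^5 u + \partial_x^{-1}\partial_y^2 u + \tfrac{1}{2}\partial_x(u^2)\bigr),
\]
which I treat term by term. The dominant linear piece $\partial_x^5 u$ yields $M^{j+1}M_{\ell+m+5(j+1)}$ directly from the induction hypothesis, with no $\varepsilon$-gain. The subcritical linear pieces $\alpha\partial_x^3 u$ and $\partial_x^{-1}\partial_y^2 u$ produce $M^{j+1}M_{\ell+m+5j+3}$ and $M^{j+1}M_{\ell+m+5j+1}$, which by two and four applications of (P2) are bounded by $\varepsilon^2$ and $\varepsilon^4$ times $M^{j+1}M_{\ell+m+5(j+1)}$ respectively. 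The nonlinear term $\frac{1}{2}\partial_t^j\partial_x^{\ell+1}\partial_y^m(u^2)$, after multi-index Leibniz expansion and use of the induction hypothesis on each factor, produces
\[
\frac{M^{j+2}}{2} \sum_{p=0}^{j}\sum_{q=0}^{\ell+1}\sum_{r=0}^{m} \binom{j}{p}\binom{\ell+1}{q}\binom{m}{r}\, M_{5p+q+r}\, M_{N-(5p+q+r)},
\]
with $N := 5j+\ell+m+1$.

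The principal obstacle is dominating this weighted triple convolution by a small multiple of $M_N$, after which one further appeal to (P2) (noting $M_N \leq \varepsilon^{4} M_{\ell+m+5(j+1)}$) converts the bound into the desired form. The difficulty is that the weighted order $5p+q+r$ is \emph{nonlinear} in $(p,q,r)$, so the triple binomial sum does not fit the single-variable convolution inequality (P1) directly. My strategy is to process the sum in two stages. First, for each fixed $p$, Vandermonde's identity collapses the $(q,r)$-sum into a single binomial $\binom{\ell+m+1}{q+r}$. Then the outer $p$-sum is split into two regimes: the boundary contributions $p\in\{0,j\}$, where one factor carries no time derivative and, using $\binom{\ell+m+1}{s}\leq \binom{N}{s}$, the inner sum reduces to an instance of (P1) plus the two endpoint terms $M_0 M_N$ absorbed by the choice of $M$; and the interior contributions $1\leq p\leq j-1$, where the polynomial denominator $(s+1)^{-2}$ built into $M_s = \varepsilon^{1-s}c(s!)^\sigma/(s+1)^2$ supplies the summable tail needed to fit the $p$-sum into the form of (P1) with a net factor of $\varepsilon$. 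Collecting the four contributions and choosing $\varepsilon_0$ small enough that the combined coefficient of $M^{j+2}M_{\ell+m+5(j+1)}$ is at most $1$ closes the induction.
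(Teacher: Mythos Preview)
Your induction framework, base case, and treatment of the three linear pieces are exactly those of the paper; the difference lies entirely in how you control the nonlinear triple convolution. The paper does \emph{not} split the $p$-sum. Instead it proves a single combinatorial lemma: for each fixed value $v$ of the weighted index, the sum $\sum_{5p+q+r=v}\binom{j}{p}\binom{\ell+1}{q}\binom{m}{r}$ is the coefficient of $x^{v}$ in $(1+x)^{\ell+m+1}(1+x^{5})^{j}$, and this generating function is dominated coefficient-wise by $(1+x)^{\ell+m+5j+1}=(1+x)^{N}$. Hence the whole triple sum collapses to $\sum_{v=1}^{N}\binom{N}{v}M_{v}M_{N-v}$, to which (P1) applies in one stroke.

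Your alternative---Vandermonde on $(q,r)$, then split the $p$-sum into boundary $p\in\{0,j\}$ and interior $1\le p\le j-1$---handles the boundary correctly, but the interior step has a gap. After Vandermonde you face $\sum_{p=1}^{j-1}\binom{j}{p}\sum_{s=0}^{\ell+m+1}\binom{\ell+m+1}{s}M_{5p+s}M_{N-5p-s}$. Your claim that the $(s+1)^{-2}$ denominator in $M_s$ ``supplies the summable tail needed to fit the $p$-sum into the form of (P1)'' is not substantiated: the issue is that many pairs $(p,s)$ share the same weighted index $v=5p+s$ (the multiplicity can be of order $\ell+m$), so even after extracting the global factor $\varepsilon$ you are left with $\sum_{p,s}\binom{j}{p}\binom{\ell+m+1}{s}\binom{N}{5p+s}^{-\sigma}\cdot(\text{summable in }v)$, and the term-wise bound $\binom{j}{p}\binom{\ell+m+1}{s}\le\binom{N}{5p+s}$ only gives a contribution $\le 1$ \emph{per pair}, not per value of $v$. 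Controlling that multiplicity uniformly in $j,\ell,m$ is precisely what the generating-function inequality accomplishes; without it the interior sum is not obviously $O(\varepsilon M_N)$. If you can supply a direct argument here you will have an alternative proof, but as written the interior case is the missing idea, and the cleanest fix is the paper's coefficient comparison $(1+x)^{\ell+m+1}(1+x^{5})^{j}\preceq(1+x)^{N}$.
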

\begin{proof}
	We proceed by induction on $j$.
	
	\noindent \textbf{Base Case: $j = 0$.} We need to show $|\partial_x^\ell \partial_y^m u(x,y,t)| \leq M^{1} M_{\ell + m}$.
	\begin{itemize}
		\item For $\ell + m = 0$: By Proposition \ref{prop:spatial}, $|u| \leq C_1$. From \eqref{M_constant}, $M \geq \frac{8 C_1}{c}$, so $C_1 \leq M \cdot \frac{c}{8} = M M_0$. Thus, $|u| \leq M M_0$.
		\item For $\ell + m = 1$: $|\partial_x u| \leq C_1^2$ and $|\partial_y u| \leq C_1^2$. Since $M \geq \frac{4 C_1^2}{c}$, we have $C_1^2 \leq M \cdot \frac{c}{4}$. Also, $M_1 = \varepsilon^{0} \frac{c (1!)^\sigma}{4} = \frac{c}{4}$. So $|\partial_x u|, |\partial_y u| \leq M M_1$.
		\item For $\ell + m \geq 2$: By \eqref{spatial}, $|\partial_x^\ell \partial_y^m u| \leq C_1^{\ell+m+1} ((\ell+m)!)^\sigma$. By property (P3) of Lemma \ref{lem:M_props}, for $\varepsilon \leq \varepsilon_1$, we have $C_1^{\ell+m+1} ((\ell+m)!)^\sigma \leq M_{\ell+m}$. Since $M \geq 1$, it follows that $|\partial_x^\ell \partial_y^m u| \leq M M_{\ell+m}$.
	\end{itemize}
	This establishes the base case.
	
	\noindent \textbf{Inductive Step:} Assume \eqref{eq:main_estimate} holds for all $j' \leq j$ and all $\ell, m \in \Nzero$. We will prove it for $j+1$ and all $\ell, m$.
	
	From the equation $\partial_t u = F(u)$, we have:
	\[
	\partial_t^{j+1} \partial_x^\ell \partial_y^m u = \partial_t^j \partial_x^\ell \partial_y^m F(u) = \alpha \partial_t^j \partial_x^{\ell+3} \partial_y^m u + \partial_t^j \partial_x^{\ell+5} \partial_y^m u + \partial_t^j \partial_x^{\ell-1} \partial_y^{m+2} u + \partial_t^j \partial_x^\ell \partial_y^m (u \partial_x u).
	\]
	We estimate each term separately.
	
	\noindent \textbf{Estimate for Linear Terms:} Let $n = \ell + m + 5(j+1)$
	\begin{itemize}
		\item \textbf{Term $A_1 = \partial_t^j \partial_x^{\ell+5} \partial_y^m u$:} By the inductive hypothesis,
		\[
		|A_1| \leq M^{j+1} M_{(\ell+5) + m + 5j} = M^{j+1} M_{\ell + m + 5(j+1)} = M^{j+1} M_n.
		\]
		Since $M > 2$, we have $M^{j+1} \leq \frac{1}{4} M^{(j+1)+1}$. Thus,
		\begin{equation}\label{equ1}
			|A_1| \leq \frac{1}{4} M^{(j+1)+1} M_n. 
		\end{equation}
		\item \textbf{Term $A_2 = \alpha \partial_t^j \partial_x^{\ell+3} \partial_y^m u$:} Similarly,
		\[
		|A_2| \leq |\alpha| M^{j+1} M_{(\ell+3) + m + 5j} = |\alpha| M^{j+1} M_{\ell + m + 5j + 3}.
		\]
		Using property (P2), $M_{\ell+m+5j+3} \leq \varepsilon^2 M_{\ell+m+5j+5} = \varepsilon^2 M_n$ (applied twice, valid for $\ell+m+5j+3 \geq 2$). For $\varepsilon$ small enough such that $|\alpha| \varepsilon^2 \leq \frac{1}{4}$, we get
		\begin{equation}\label{equ2}
			|A_2| \leq \frac{1}{4} M^{j+1} M_n \leq \frac{1}{4} M^{(j+1)+1} M_n. 
		\end{equation}
		\item \textbf{Term $A_3 = \partial_t^j \partial_x^{\ell-1} \partial_y^{m+2} u$:} By the inductive hypothesis,
		\[
		|A_3| \leq M^{j+1} M_{(\ell-1) + (m+2) + 5j} = M^{j+1} M_{\ell + m + 5j + 1}.
		\]
		Using (P2), $M_{\ell+m+5j+1} \leq \varepsilon^4 M_{\ell+m+5j+5} = \varepsilon^4 M_n$ (applied four times). Choosing $\varepsilon$ small enough so that $\varepsilon^4 \leq \frac{1}{4}$, we obtain
		\begin{equation}\label{equ3}
			|A_3| \leq \frac{1}{4} M^{j+1} M_n \leq \frac{1}{4} M^{(j+1)+1} M_n. 
		\end{equation}
	\end{itemize}
	
	\noindent \textbf{Estimate for the Nonlinear Term $N = \partial_t^j \partial_x^\ell \partial_y^m (u \partial_x u)$:}
	
	Using the Leibniz rule for $\partial_x^\ell$, $\partial_y^m$, and $\partial_t^j$, we get:
	\[
	N = \sum_{p=0}^{\ell} \sum_{q=0}^{m} \sum_{r=0}^{j} \binom{\ell}{p} \binom{m}{q} \binom{j}{r} \left( \partial_t^{j-r} \partial_x^{\ell-p} \partial_y^{m-q} u \right) \left( \partial_t^{r} \partial_x^{p+1} \partial_y^{q} u \right).
	\]
	
	Applying the inductive hypothesis to both factors:
	\begin{align*}
		|\partial_t^{j-r} \partial_x^{\ell-p} \partial_y^{m-q} u| &\leq M^{(j-r)+1} M_{(\ell-p) + (m-q) + 5(j-r)}, \\
		|\partial_t^{r} \partial_x^{p+1} \partial_y^{q} u| &\leq M^{r+1} M_{(p+1) + q + 5r}.
	\end{align*}
	The product of the $M$-powers is $M^{(j-r+1) + (r+1)} = M^{j+2}$.
	Thus,
	\begin{equation}
		|N| \leq M^{j+2} \sum_{p=0}^{\ell} \sum_{q=0}^{m} \sum_{r=0}^{j} \binom{\ell}{p} \binom{m}{q} \binom{j}{r} M_{(\ell-p)+(m-q)+5(j-r)} \cdot M_{(p+1)+q+5r}. \label{eq:N_sum_start}
	\end{equation}
	We now state a combinatorial lemma that generalizes to three summations.	
	
	\begin{lemma} \label{lem:combinatorial}
		For $\ell, m, j \in \Nzero$, and the sequence $\{M_q\}$ defined above, we have
		\[
		\sum_{p=0}^{\ell} \sum_{q=0}^{m} \sum_{r=0}^{j} \binom{\ell}{p} \binom{m}{q} \binom{j}{r} M_{(\ell-p)+(m-q)+5(j-r)} \cdot M_{(p+1)+q+5r} \leq \sum_{s=1}^{k} \binom{k}{s} M_s M_{k-s},
		\]
		where $k = \ell + m + 5j + 1$.
	\end{lemma}	
	\begin{proof}
		
		Let us define:
		\begin{align*}
			l &= (\ell-p) + (m-q) + 5(j-r) \\
			s &= (p+1) + q + 5r.
		\end{align*}
		So
		\begin{align*}
			l + s &= (\ell-p) + (m-q) + 5(j-r) + (p+1) + q + 5r \\
			&= \ell + m + 5j + 1 = k.
		\end{align*}
		
		Thus $s = k - l$, our sum becomes:
		
		\[
		S = \sum_{p=0}^{\ell} \sum_{q=0}^{m} \sum_{r=0}^{j} \binom{\ell}{p} \binom{m}{q} \binom{j}{r} M_{k - s} \cdot M_s.
		\]
		We can rearrange the triple sum by first summing over all possible values of $s$, and then over all triples $(p,q,r)$ that yield that particular value of $s$:
		
		\[
		S = \sum_{s=1}^{k} \left[ \sum_{\substack{0 \leq p \leq \ell \\ 0 \leq q \leq m \\ 0 \leq r \leq j \\ (p+1) + q + 5r = s}} \binom{\ell}{p} \binom{m}{q} \binom{j}{r} \right] M_{k - s} M_s.
		\]
		We now prove that for each fixed $s = 1, \ldots, k$:
		
		\[
		\sum_{\substack{0 \leq p \leq \ell \\ 0 \leq q \leq m \\ 0 \leq r \leq j \\ (p+1) + q + 5r = s}} \binom{\ell}{p} \binom{m}{q} \binom{j}{r} \leq \binom{k}{s}.
		\]
		Consider the generating function:
		
		\[
		F(x) = (1 + x)^\ell (1 + x)^m (1 + x^5)^j
		\]
		
		The coefficient of $x^{s-1}$ in $F(x)$ is exactly:
		
		\[
		\sum_{\substack{0 \leq p \leq \ell \\ 0 \leq q \leq m \\ 0 \leq r \leq j \\ p + q + 5r = s-1}} \binom{\ell}{p} \binom{m}{q} \binom{j}{r}
		\]
		
		But note that our condition is $(p+1) + q + 5r = s$, which is equivalent to $p + q + 5r = s - 1$. Therefore, the sum is exactly the coefficient of $x^{s-1}$ in $F(x)$.
		
		Now, we compare $F(x)$ with $(1 + x)^k = (1 + x)^{\ell + m + 5j + 1}$.
		
		\begin{lemma}Let $\ell, m, j\in \Nzero$ and let $x \geq 0$. Then
			$$(1 + x)^{\ell + m}(1 + x^5)^j \leq (1 + x)^{\ell + m + 5j}.$$	
		\end{lemma} 
		
		\begin{proof}
			We prove this by induction on $j$. 
			
			For $j = 0$, both sides are $(1 + x)^{\ell + m}$, so the inequality holds with equality.
			
			Assume the claim holds for some $j \geq 0$. Then for $j + 1$:
			
			\begin{align*}
				(1 + x)^{\ell + m}(1 + x^5)^{j+1} &= (1 + x)^{\ell + m}(1 + x^5)^j (1 + x^5) \\
				&\leq (1 + x)^{\ell + m + 5j} (1 + x^5).
			\end{align*}
			
			Now we show that $(1 + x)^{\ell + m + 5j} (1 + x^5) \leq (1 + x)^{\ell + m + 5(j+1)}$.
			
			The coefficient of $x^t$ in $(1 + x)^{\ell + m + 5j} (1 + x^5)$ is
			
			\[
			A_t = \binom{\ell + m + 5j}{t} + \binom{\ell + m + 5j}{t - 5}.
			\]
			The coefficient of $x^t$ in $(1 + x)^{\ell + m + 5j + 5}$ is			
			\[
			B_t = \binom{\ell + m + 5j + 5}{t}.
			\]			
			Using the Pascal identity
			
			\[
			\binom{\ell + m + 5j + 5}{t} = \sum_{i=0}^{5} \binom{\ell + m + 5j}{t - i}.
			\]			
			Since all binomial coefficients are nonnegative, we have			
			\[
			A_t = \binom{\ell + m + 5j}{t} + \binom{\ell + m + 5j}{t - 5} \leq \sum_{i=0}^{5} \binom{\ell + m + 5j}{t - i} =B_t.
			\]
		\end{proof}
		
		Therefore, we have		
		\[
		(1 + x)^{\ell + m}(1 + x^5)^j \leq (1 + x)^{\ell + m + 5j}.
		\]		
		Multiplying both sides by $(1 + x)$
		\[
		(1 + x)^{\ell + m + 1}(1 + x^5)^j \leq (1 + x)^{\ell + m + 5j + 1} = (1 + x)^k.
		\]
		But $(1 + x)^{\ell + m + 1}(1 + x^5)^j$ is not exactly our generating function $F(x)$. However, note that:
		
		\[
		F(x) = (1 + x)^\ell (1 + x)^m (1 + x^5)^j = (1 + x)^{\ell + m}(1 + x^5)^j.
		\]
		So we actually have		
		\[
		F(x) = (1 + x)^{\ell + m}(1 + x^5)^j \leq (1 + x)^{\ell + m + 5j}.
		\]		
		Now, the coefficient of $x^{s-1}$ in $(1 + x)^{\ell + m + 5j}$ is $\binom{\ell + m + 5j}{s - 1}$, and we have		
		\[
		\binom{\ell + m + 5j}{s - 1} = \binom{k - 1}{s - 1}.
		\]		
		Finally, using the identity		
		\[
		\binom{k - 1}{s - 1} \leq \binom{k}{s},
		\]		
		which holds because $\binom{k}{s} = \frac{k}{s} \binom{k - 1}{s - 1}$ and $k \geq s$, we obtain
		\[
		\sum_{\substack{0 \leq p \leq \ell \\ 0 \leq q \leq m \\ 0 \leq r \leq j \\ (p+1) + q + 5r = s}} \binom{\ell}{p} \binom{m}{q} \binom{j}{r} \leq \binom{k}{s}.
		\]
		This completes the proof of the combinatorial inequality. So, we have
		
		\begin{align*}
			S &= \sum_{s=1}^{k} \left[ \sum_{\substack{0 \leq p \leq \ell \\ 0 \leq q \leq m \\ 0 \leq r \leq j \\ (p+1) + q + 5r = s}} \binom{\ell}{p} \binom{m}{q} \binom{j}{r} \right] M_{k - s} M_s \\
			&\leq \sum_{s=1}^{k} \binom{k}{s} M_{k - s} M_s
		\end{align*}
		
		This is exactly the desired result.
	\end{proof}
	
	Using Lemma \ref{lem:combinatorial} in \eqref{eq:N_sum_start}, we get:
	\[
	|N| \leq M^{j+2} \sum_{s=1}^{k} \binom{k}{s} M_s M_{k-s}.
	\]
	We split the sum on the right-hand side
	\begin{align*}
		\sum_{s=1}^{k} \binom{k}{s} M_s M_{k-s} &= M_k M_0 + \sum_{s=1}^{k-1} \binom{k}{s} M_s M_{k-s}\\ \\& = M_k M_0 + \sum_{s=1}^{k-1} \binom{k}{s} M_s M_{k-s}.
	\end{align*}
	Now we apply property (P1) to the sum $\sum_{s=1}^{k-1}$
	\[
	\sum_{s=1}^{k-1} \binom{k}{s} M_s M_{k-s} \leq \varepsilon M_k.
	\]
	Therefore,
	\[
	\sum_{s=1}^{k} \binom{k}{s} M_s M_{k-s} \leq ( M_0 + \varepsilon) M_k.
	\]
	Recall $M_0 = c/8$ and $k = \ell + m + 5j + 1$. So,
	\begin{equation}
		|N| \leq M^{j+2} (M_0 + \varepsilon) M_{\ell + m + 5j + 1}. \label{eq:N}
	\end{equation}
	
	Finally, we relate $M_{\ell + m + 5j + 1}$ to $M_n = M_{\ell + m + 5(j+1)}$. Using property (P2) repeatedly, we have $M_{\ell+m+5j+1} \leq \varepsilon^4 M_{\ell+m+5j+5} = \varepsilon^4 M_n$ (applying (P2) four times). Substituting into \eqref{eq:N}:
	\[
	|N| \leq M^{j+2} (M_0 + \varepsilon) \varepsilon^4 M_n = M^{(j+1)+1} \left[ M (M_0 + \varepsilon) \varepsilon^4 \right] M_n.
	\]
	Now, choose $\varepsilon$ small enough so that
	\[
	M (M_0 + \varepsilon) \varepsilon^4 \leq \frac{1}{4}.
	\]
	This is possible since the left-hand side is a polynomial in $\varepsilon$. With this choice,
	\begin{equation}
		|N| \leq \frac{1}{4} M^{(j+1)+1} M_n. \label{equ4}
	\end{equation}
	Combining the estimates \eqref{equ1}, \eqref{equ2}, \eqref{equ3}, and \eqref{equ4}, we obtain:
	\[
	|\partial_t^{j+1} \partial_x^\ell \partial_y^m u| \leq \left( \frac{1}{4} + \frac{1}{4} + \frac{1}{4} + \frac{1}{4} \right) M^{(j+1)+1} M_{\ell + m + 5(j+1)} = M^{(j+1)+1} M_{\ell + m + 5(j+1)}.
	\]
\end{proof}

\subsubsection*{Proof of Theorem \ref{thm:main} ($G^{5\sigma}$ regularity):}

With Proposition \ref{prop:main_estimate} established, the proof of Theorem \ref{thm:main} is immediate. Setting $\ell = m = 0$ in \eqref{eq:main_estimate}, we have
\[
|\partial_t^j u(x, y, t)| \leq M^{j+1} M_{5j}.
\]
Recall the definition of $M_{5j}$ from \eqref{Mq}:
\[
M_{5j} = \varepsilon^{1-5j} \frac{c ((5j)!)^\sigma}{(5j+1)^2}.
\]
Substituting, we get
\[
|\partial_t^j u| \leq M \cdot M^j \cdot \varepsilon^{1-5j} \frac{c ((5j)!)^\sigma}{(5j+1)^2} = M \varepsilon c \left( \frac{M}{\varepsilon^5} \right)^j \frac{((5j)!)^\sigma}{(5j+1)^2}.
\]
Since $(5j+1)^2 \geq 1$, we can drop it for an upper bound. Using the elementary inequality $(5j)! \leq 5^{5j} (j!)^5$, we have $((5j)!)^\sigma \leq 5^{5\sigma j} (j!)^{5\sigma}$.
Thus,
\[
|\partial_t^j u| \leq M \varepsilon c \left( \frac{M \cdot 5^{5\sigma}}{\varepsilon^5} \right)^j (j!)^{5\sigma}.
\]
Define $L = \max\left\{ M \varepsilon c,\ \frac{M \cdot 5^{5\sigma}}{\varepsilon^5} \right\}$. Then,
\[
|\partial_t^j u| \leq L^{j+1} (j!)^{5\sigma}.
\]
This is precisely the definition of the solution $u(x,y,t)$ belonging to the Gevrey class $G^{5\sigma}$ for $t \in [-T, T]$.

\section{Conclusion}
In this work, we have established sharp Gevrey regularity in time for solutions of a fifth-order  Kadomtsev–Petviashvili equation, proving that if the initial data belong to $G^{\sigma}$ in the spatial variables, then the corresponding solution is Gevrey regular of order $5\sigma$ in time. 

The analysis relies on a refined majorant series method that precisely tracks the contribution of the dispersive term $\partial_x^5 u$ and the nonlinear interactions $\alpha \partial_x^3 u$, $\partial_x^{-1}\partial_y^2 u$, and $u\partial_x u$. 

This approach provides a robust analytic framework for controlling factorial growth in higher derivatives. 
Furthermore, we show the failure of $G^{r}$-regularity in time for any $1 \le z < 5\sigma$, indicating that the obtained Gevrey index $5\sigma$ is optimal. 

This method establishes a general mechanism for proving the failure of temporal Gevrey regularity (see Lemma \ref{structure}). For any dispersive equation of the form

\begin{equation}\label{class}
	\partial_t u = \pm\partial_x^\alpha u + P(u),
\end{equation}

where $\partial_x^\alpha $ is the highest spatial derivative and $P(u)$ a polynomial in spatial derivatives of total order at most $\alpha-1$, the solution cannot belong to the Gevrey class $G^z$ in time for any $z$ satisfying $1 \leq z<\alpha \sigma$.

A canonical example is the Kawahara equation, which is given by
$$
\partial_t u+u \partial_x u+\beta \partial_x^3 u-\delta \partial_x^5 u=0.
$$
This fits the general structure \eqref{class} with $\alpha=5$ and $P(u)=-u \partial_x u-\beta \partial_x^3 u$, where the highest order in $P(u)$ is $3$. Consequently, our main theorem implies that for initial data in Gevrey space, the solution is in the class $G^{5 \sigma}$ in time, and the regularity in $G^z$ fails for any $1 \leq z<5 \sigma$.

\end{document}